\documentclass[12pt,a4paper]{amsart}
\usepackage{amsfonts, amssymb, amsmath, amsthm}
\usepackage{graphicx,latexsym,eufrak,mathrsfs}
\usepackage[margin=1.2in]{geometry}

\newtheorem{theorem}{Theorem}[section]
\newtheorem{proposition}[theorem]{Proposition}
\newtheorem{lemma}[theorem]{Lemma}
\newtheorem{corollary}[theorem]{Corollary}

\theoremstyle{definition}

\theoremstyle{remark}
\newtheorem{remark}[theorem]{Remark}

\numberwithin{equation}{section}

\DeclareMathAlphabet{\mathpzc}{OT1}{pzc}{m}{it}

\begin{document}

\title[Ultradistributional boundary values of harmonic functions] { Ultradistributional boundary values of harmonic functions on the sphere}

\author[\DJ.Vu\v{c}kovi\'{c}]{\DJ or\dj e Vu\v{c}kovi\'{c}}
\address{Department of Mathematics, Ghent University, Krijgslaan 281, 9000 Ghent, Belgium}
\email{dorde.vuckovic@UGent.be}

\author[J. Vindas]{Jasson Vindas}
\address{Department of Mathematics, Ghent University, Krijgslaan 281, 9000 Ghent, Belgium}
\email{jasson.vindas@UGent.be}

\thanks{The authors gratefully acknowledge support by Ghent University, through the BOF-grant 01N01014.}

\subjclass[2010]{Primary 31B05, 46F20; Secondary 42C10, 46F15.}
\keywords{Harmonic functions on the unit ball; boundary values on the sphere; partial derivatives of spherical harmonics; support of ultradistributions; ultradifferentiable functions; Abel summabiity.}

\maketitle

\begin{abstract} We present a theory of ultradistributional boundary values for harmonic functions defined on the Euclidean unit ball. We also give a characterization of ultradifferentiable functions and ultradistributions on the sphere in terms of their spherical harmonic expansions. To this end, we obtain explicit estimates for partial derivatives of spherical harmonics, which are of independent interest and refine earlier estimates by Calder\'{o}n and Zygmund. 
We apply our results to characterize the support of ultradistributions on the sphere via Abel summability of their spherical harmonic expansions.
\end{abstract}

\section{Introduction}
The study of boundary values of harmonic and analytic functions is a classical and important subject in distribution and ultradistribution theory. There is a vast literature dealing with boundary values on $\mathbb{R}^{n}$, see e.g. \cite{alvarez2007,C-K-P,C-M,D-P-VBV2015,f-g-g,Hormander,Petzsche84} and references therein. 
In the case of the unit sphere $\mathbb{S}^{n-1}$, the characterization of harmonic functions in the Euclidean unit ball of $\mathbb{R}^{n}$ having distributional boundary values on $\mathbb{S}^{n-1}$ was given by Estrada and Kanwal in \cite{Estrada}.
In a recent article \cite{GonzalezV2016}, Gonz\'{a}lez Vieli has used the Poisson transform to obtain a very useful description of the support of a Schwartz distribution on the sphere (cf. \cite{V-E2010} for support characterizations on  $\mathbb{R}^{n}$). Representations of analytic functionals on the sphere \cite{Morimoto1998} as initial values of solutions to the heat equation were studied by Morimoto and Suwa \cite{Morimoto2000}.

In this article we generalize the results from \cite{Estrada} to the framework of ultradistributions \cite{Komatsu,Komatsu2} and supply a theory of ultradistributional boundary values of harmonic functions on $\mathbb{S}^{n-1}$. Our goal is to characterize all those harmonic functions $U$, defined in the unit ball, that admit boundary values $
\lim_{r\to 1^{-}} U(r\omega)$
in an ultradistribution space ${\mathcal{E}^{\ast}}'(\mathbb S^{n-1})$. Our considerations apply to both non-quasianalytic and quasianalytic ultradistributions, and, in particular, to analytic functionals. As an application, we also obtain a characterization of the support of a non-quasianalytic ultradistribution in terms of Abel summability of its spherical harmonic series expansion. Since Schwartz distributions are naturally embedded into the spaces of ultradistributions in a support preserving fashion, our support characterization contains as a particular instance that of Gonz\'{a}lez Vieli quoted above.

In Section \ref{section spherical harmonics} we study spaces of ultradifferentiable functions and ultradistributions through spherical harmonics. Our main results there are descriptions of these spaces in terms of the decay or growth rate of the norms of the projections of a function or an ultradistribution onto the spaces of spherical harmonics. We also establish the convergence of the spherical harmonic series in the corresponding space. Note that eigenfunction expansions of ultradistributions on compact analytic manifolds have recently been investigated in \cite{D-R2014,D-R2016} with the aid of pseudodifferential calculus (cf. \cite{Dorde-V2016} for the Euclidean global setting). However, our approach here is quite different and is rather based on explicit estimates for partial derivatives of solid harmonics and spherical harmonics that are obtained in Section \ref{section: estimates spherical harmonics}.  
Such estimates are of independent interest and refine earlier bounds by Calder\'{on} and Zygmund from \cite{Calderon}.  

Harmonic functions with ultradistributional boundary values are characterized in Section \ref{section: boundary values harmonic}. The characterization is in terms of the growth order of the harmonic function near the boundary $\mathbb{S}^{n-1}$; we also show in Section \ref{section: boundary values harmonic} that a harmonic function satisfying such growth conditions must necessarily be the Poisson transform of an ultradistribution. In the special case of analytic functionals, our result yields as a corollary: \emph{any} harmonic function on the unit ball arises as the Poisson transform of some analytic functional on the sphere. Finally, Section \ref{section: support sphere} deals with the characterization of the support of non-quasianalytic ultradistributions on $\mathbb{S}^{n-1}$.

\section{Preliminaries}
We employ the notation $\mathbb B^n$ for the open unit ball of $\mathbb R^n$.  We work in dimension $n\geq 2$.

\subsection{Spherical harmonics}The theory of spherical harmonics is a classical subject in analysis and it is very well explained in several textbooks (see e.g. \cite{Han,Axler}). The space of solid harmonics of degree $j$ will be denoted by ${\mathcal H}_j(\mathbb R^n)$, its elements are the harmonic homogeneous polynomials of degree $j$ on $\mathbb{R}^{n}$.  A spherical harmonic of degree $j$ is the restriction to $\mathbb S^{n-1}$ of a solid harmonic of degree $j$ and we write $\mathcal H_j(\mathbb S^{n-1})$ for space of all spherical harmonics of degree $j$. Its dimension, denoted as $d_j=\operatorname*{dim} \mathcal{H}_{j}(\mathbb{S}^{n-1})$, is (cf. \cite{Axler} or \cite[Thm. 2, p. 117]{Se})

$$d_j=\frac{(2j+n-2)(n+j-3)!}{j!(n-2)!}\sim \frac{2j^{n-2}}{(n-2)!}.
$$
From this exact formula, it is not hard to see that $d_j$ satisfies the bounds
\begin{equation}\label{dj}
\frac{2}{(n-2)!} j^{n-2}<d_j\leq n j^{n-2}, \quad \mbox{for all } j\geq1.  
\end{equation}

It is well known \cite{Axler} that
$$
L^{2}(\mathbb{S}^{n-1})=\bigoplus_{j=0}^{\infty} \mathcal{H}_{j}(\mathbb{S}^{n-1}),
$$
where the $L^2$-inner product is taken with respect to the surface measure of $\mathbb{S}^{n-1}$. The orthogonal projection of $f\in L^{2}(\mathbb{S}^{n-1})$ onto $\mathcal{H}_{j}(\mathbb{S}^{n-1})$ will always be denoted as $f_{j}$; it is explicitly given by
$$
f_{j}(\omega)=\frac{1}{|\mathbb{S}^{n-1}|}\int_{\mathbb S^{n-1}} f(\xi) Z_j(\omega,\xi)d\xi, $$
where  $Z_j(\omega,x)$ is the zonal harmonic of degree $j$ with pole $\omega$ \cite{Axler}. We then have that $|\mathbb{S}^{n-1}|^{-1}Z_j(\xi,x)$ is the reproducing kernel of $\mathcal{H}_{j}(\mathbb{S}^{n-1})$, namely,
\begin{equation}
\label{repKerneleq}
Y_{j}(\omega)=\frac{1}{|\mathbb{S}^{n-1}|}\int_{\mathbb S^{n-1}} Y_{j}(\xi) Z_j(\omega,\xi)d\xi, \quad \mbox{for every } Y_{j}\in\mathcal{H}_{j}(\mathbb{S}^{n-1}). 
\end{equation}

\subsection{Homogeneous extensions and differential operators on $\mathbb{S}^{n-1}$} We write $\mathcal{E}(\Omega)=C^{\infty}(\Omega)$, where $\Omega$ is an open subset of $\mathbb{R}^{n}$ or $\mathbb{S}^{n-1}$. Given a  function $\varphi$ on $\mathbb{S}^{n-1}$, its homogeneous extension (of order 0)  is the function $\varphi^{\upharpoonright}$ defined as $\varphi^{\upharpoonright}(x)=\varphi (x/|x|)$ on $\mathbb{R}^{n}\setminus\{0\}$. It is easy to see that $\varphi\in\mathcal{E}(\mathbb{S}^{n-1})$ if and only if $\varphi^{\upharpoonright}\in\mathcal{E}(\mathbb{R}^{n}\setminus\{0\})$. Furthermore, we define the differential operators $\partial^{\alpha}_{\mathbb{S}^{n-1}}: \mathcal{E}(\mathbb{S}^{n-1}) \to \mathcal{E}(\mathbb{S}^{n-1})$ via 
$$
(\partial^{\alpha}_{\mathbb{S}^{n-1}} \varphi) (\omega)= (\partial^{\alpha} \varphi^{\upharpoonright}) (\omega), \quad \omega\in\mathbb{S}^{n-1}.
$$
We can then consider $L(\partial_{\mathbb{S}^{n-1}})$ for any differential operator $L(\partial)$ defined on $\mathbb{R}^{n}\setminus\{0\}$. In particular, $\Delta_{\mathbb{S}^{n-1}}$ stands for the Laplace-Beltrami operator of the sphere. 

Finally, if $F$ is a function on $\mathbb{R}^{n}$, we simply write $\|F\|_{L^{q}(\mathbb{S}^{n-1})}$ for the $L^{q}(\mathbb{S}^{n-1})$-norm of its restriction to $\mathbb{S}^{n-1}$.

\section{Estimates for partial derivatives of  spherical harmonics}
\label{section: estimates spherical harmonics}
Calder\'{o}n and Zygmund showed \cite[Eq. (4), p. 904]{Calderon} the following estimates for the partial derivatives of a spherical harmonic $Y_{j}\in \mathcal{H}_{j}(\mathbb{S}^{n-1})$, 
\begin{equation}
\label{CZestimate}
\|\partial^{\alpha}_{\mathbb{S}^{n-1}}Y_{j}\|_{L^{\infty}(\mathbb{S}^{n-1})}\leq C_{\alpha,n}\: j^{|\alpha|} \|Y_{j}\|_{L^{\infty}(\mathbb{S}^{n-1})},
\end{equation}
where the constants $C_{\alpha,n}$ depend on the order of differentiation and the dimension on an unspecified way. The same topic is treated in Seeley's article \cite{Se}. 

The goal of this section is to refine \eqref{CZestimate} by exhibiting explicit constants $C_{\alpha,n}$. We also give explicit bounds for the partial derivatives of spherical harmonics in spherical coordinates. Such estimates in spherical coordinates play an important role in the next section. We consider here  $\mathfrak{p}(\theta)=(\mathfrak{p}_1(\theta),\dots,\mathfrak{p}_n(\theta))$,
$$
\mathfrak{p}(\theta)=(\cos\theta_1,\sin\theta_1\cos\theta_2,\dots, \sin\theta_1\cdots\sin\theta_{n-2}\cos\theta_{n-1},\sin\theta_1\dots\sin\theta_{n-2}\sin\theta_{n-1}),
$$ 
where $\theta\in\mathbb{R}^{n-1}$. Naturally, the estimate \eqref{inequality2} below also holds if we choose the north pole to be located at a point other than $(1,0,\dots, 0)$.

\begin{theorem}\label{thestimate} We have the bounds:
\begin{itemize}
\item [(a)] For every solid harmonic $Q_j\in\mathcal H_j(\mathbb R^n)$ and all $\alpha\neq 0$, 
\begin{equation}
\label{inequality1}
\|\partial^{\alpha}Q_j\|_{L^{\infty}(\mathbb S^{n-1})}\leq e^{\frac{n}{4}-\frac{1}{2}}\sqrt{n} \: 2^{\frac{|\alpha|}{2}} j^{|\alpha|+\frac{n}{2}-1}\|Q_j\|_{L^{\infty}(\mathbb S^{n-1})}.   \end{equation}
\item [(b)] For all spherical harmonic $Y_j\in\mathcal H_j(\mathbb {S}^{n-1})$ and all $\alpha\neq 0$,
\begin{equation}\label{inequality2}
\|\partial^{\alpha}_{\theta}(Y_j\circ \mathfrak{p}) \|_{L^{\infty}(\mathbb{R}^{n-1})}\leq e^{\frac{n}{4}-\frac{1}{2}}\sqrt{n} \left( (n+1)^{|\alpha|}-1 \right)2^{\frac{|\alpha|}{2}}j^{|\alpha|+\frac{n}{2}-1} \|Y_j\|_{L^{\infty}(\mathbb S^{n-1})}.
\end{equation}
\item [(c)] For all spherical harmonic $Y_j\in\mathcal H_j(\mathbb {S}^{n-1})$, all $\alpha\neq 0$, and any $\varepsilon>0$,
\begin{equation}\label{inequality3}
\|\partial^{\alpha}_{\mathbb S^{n-1}}Y_j\|_{L^{\infty}(\mathbb{S}^{n-1})}\leq e^{n\left(\frac{1}{4}+\sqrt{2}+3\sqrt{2+4/\varepsilon}\right) -\frac{1}{2} }n^{\frac{|\alpha|+1}{2}} (2+\varepsilon)^{|\alpha|}j^{|\alpha|+\frac{n}{2}-1}|\alpha| !  \|Y_j\|_{L^{\infty}(\mathbb S^{n-1})}.
\end{equation}
\end{itemize}
\end{theorem}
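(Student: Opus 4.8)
The plan is to prove the three estimates in sequence, with (a) as the analytic core and (b), (c) following by combinatorial manipulations of the chain rule.

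For part (a), the natural starting point is a Cauchy-type integral estimate for harmonic functions: since $Q_j$ is harmonic in $\mathbb R^n$, its value and derivatives at a point can be recovered by integrating against a Poisson-type kernel over a small ball. More precisely, I would fix $\omega\in\mathbb S^{n-1}$ and work on a ball $B(\omega,\rho)$ with $\rho$ to be optimized; on the sphere $\partial B(\omega,\rho)$ the polynomial $Q_j$ is bounded by $(1+\rho)^j\|Q_j\|_{L^\infty(\mathbb S^{n-1})}$ using homogeneity (every point of $\partial B(\omega,\rho)$ has norm at most $1+\rho$, and $\|Q_j\|_{L^\infty(\partial B(0,r))}=r^j\|Q_j\|_{L^\infty(\mathbb S^{n-1})}$). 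The interior derivative estimate for harmonic functions then gives $|\partial^\alpha Q_j(\omega)|\le C(n,|\alpha|)\rho^{-|\alpha|}(1+\rho)^j\|Q_j\|_{L^\infty(\mathbb S^{n-1})}$. Choosing $\rho$ of order $1/j$ — the precise choice $\rho=|\alpha|/(2j)$ or similar — and using $(1+c/j)^j\le e^c$ converts the $(1+\rho)^j$ factor into a constant, leaving $j^{|\alpha|}$ times the dimensional constant; the extra $j^{n/2-1}$ and the $2^{|\alpha|/2}$, $\sqrt n$, $e^{n/4-1/2}$ factors should emerge from optimizing the constant in the harmonic interior estimate (which itself involves a $\rho^n$ volume factor and Stirling-type bounds, producing the $e^{n/4}$ and the $\sqrt n$). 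The main obstacle here is bookkeeping: getting the \emph{explicit} constant to come out exactly as stated requires a careful, non-wasteful choice of $\rho$ and sharp elementary inequalities (e.g. $x^{-x}$-type bounds, $\binom{|\alpha|+n-1}{n-1}$ estimates) rather than the crude $C_{\alpha,n}$ of Calder\'on--Zygmund.

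For part (b), I would write $Y_j\circ\mathfrak p=(Y_j)^{\upharpoonright}\circ\mathfrak p$ (since $|\mathfrak p(\theta)|=1$) and apply the multivariate Fa\`a di Bruno / chain rule to the composition. Each entry $\mathfrak p_i(\theta)$ is a product of sines and cosines, so all its partial derivatives are bounded by $1$ in absolute value, and there are at most $n+1$ ``ingredients'' contributing at each differentiation step; iterating $|\alpha|$ times and summing over the combinatorial tree of ways to distribute derivatives onto the $n$ components of $\mathfrak p$ produces the factor $(n+1)^{|\alpha|}-1$ (the ``$-1$'' reflecting that $\alpha\neq 0$, so the zeroth-order term is excluded). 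Each appearance of a derivative of $(Y_j)^{\upharpoonright}$ of order $k\le|\alpha|$ is controlled by part (a) — after checking that $\|\partial^\beta (Y_j)^{\upharpoonright}\|_{L^\infty(\mathbb S^{n-1})}$ for the homogeneous-degree-$0$ extension is dominated by the corresponding bound for the solid harmonic $Q_j$ of degree $j$ extending $Y_j$, up to the same constants. Since $j^{k+n/2-1}\le j^{|\alpha|+n/2-1}$ and $2^{k/2}\le 2^{|\alpha|/2}$ for $k\le|\alpha|$ (assuming $j\ge 1$), the worst term dominates and factors out, giving \eqref{inequality2}.

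For part (c), the operators $\partial^\alpha_{\mathbb S^{n-1}}$ act on the homogeneous extension $Y_j^{\upharpoonright}$, so I would express $\partial_{x_i}$ acting on a degree-$0$ homogeneous function in spherical coordinates $(\,|x|,\theta)$: on $|x|=1$ this becomes a first-order differential operator in $\theta$ with coefficients that are smooth functions of $\theta$ (rational in the sines/cosines, with the $1/\sin$ singularities being only apparent when acting on the full range of smooth functions, but here genuinely bounded because we only hit spherical harmonics). Composing $|\alpha|$ such operators and expanding gives a sum of $\theta$-derivatives of $Y_j\circ\mathfrak p$ of order up to $|\alpha|$, with coefficients bounded by a geometric factor; estimating the number of terms and the coefficient sizes is where the $|\alpha|!$, the $(2+\varepsilon)^{|\alpha|}$, and the $\varepsilon$-dependent exponential $e^{n(1/4+\sqrt2+3\sqrt{2+4/\varepsilon})}$ arise — the $\varepsilon$ is the free parameter in a Cauchy-estimate-style bound for the coefficients (which are analytic in $\theta$, allowing one to trade a smaller base $2+\varepsilon$ against a larger constant). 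I then feed part (b) into each term and collect. The main obstacle across (b) and (c) is controlling the \emph{combinatorial explosion} of the iterated chain rule with explicit, clean constants; the analytic content is entirely in (a), and once that is sharp the rest is a disciplined — if lengthy — induction on $|\alpha|$.
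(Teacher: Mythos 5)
Your plan for part (a) is where the proposal breaks down, and since (b) and (c) are built on (a), the whole argument inherits the problem. The interior (Cauchy-type) derivative estimate for harmonic functions costs a dimension-dependent geometric factor per derivative: iterating the one-derivative bound on nested balls gives $|\partial^\alpha Q_j(\omega)|\le (c_n|\alpha|/\rho)^{|\alpha|}\sup_{B(\omega,\rho)}|Q_j|$ with $c_n\ge 1$ growing in $n$, so even with your choice $\rho=|\alpha|/(2j)$ you obtain a bound of the form $(2c_n\sqrt e)^{|\alpha|}j^{|\alpha|}\|Q_j\|_{L^\infty(\mathbb S^{n-1})}$ --- a base at least $2\sqrt e>\sqrt2$ and depending on $n$. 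This cannot be ``optimized'' down to the stated $2^{|\alpha|/2}$, and moreover your sketch has no mechanism producing the factor $j^{\frac n2-1}$ at all. The paper gets both from an entirely different source: Seeley's $L^2$ recursion $\int_{\mathbb S^{n-1}}|\partial^\alpha Q_j|^2\le(j-|\alpha|+1)(n+2j-2|\alpha|)\int_{\mathbb S^{n-1}}|\partial^\beta Q_j|^2$ (one derivative at a time, $|\beta|=|\alpha|-1$), whose iteration yields the clean coefficient $2^{|\alpha|}j^{2|\alpha|}e^{\frac n2-1}$ in $L^2$, followed by the reproducing-kernel bound $|\partial^\alpha Q_j(\omega)|\le\sqrt{d_{j-|\alpha|}/|\mathbb S^{n-1}|}\,\|\partial^\alpha Q_j\|_{L^2(\mathbb S^{n-1})}$, which is where $\sqrt n\,j^{\frac n2-1}$ comes from since $d_{j-|\alpha|}\le nj^{n-2}$. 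Without this $L^2$ detour through zonal harmonics you cannot reach \eqref{inequality1} as stated.

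Two further points. In (b) you propose to control ``each appearance of a derivative of $(Y_j)^{\upharpoonright}$'' by part (a); but (a) bounds derivatives of the \emph{solid} harmonic $Q_j$, not of the degree-$0$ homogeneous extension --- the latter are exactly the quantities $\partial^\beta_{\mathbb S^{n-1}}Y_j$ of part (c) and carry an extra $|\beta|!$, so the ``check'' you defer would fail. The fix is to apply Fa\`a di Bruno to $Q_j\circ\mathfrak{p}=Y_j\circ\mathfrak{p}$ directly, as the paper does (together with $\partial^\lambda Q_j=0$ for $|\lambda|>j$, which lets one absorb the Stirling-number factor $k^{m-k}$ into $j^{m-k}$ and sum the binomial series to $(n+1)^{|\alpha|}-1$). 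For (c), the paper does not invert spherical coordinates; it adapts H\"ormander's argument for composition with analytic maps: extend $F(x)=x/|x|$ holomorphically near $\omega\in\mathbb S^{n-1}$, form the holomorphic functions $G_m(z)=\sum_{|\beta|\le m}\partial^\beta Q_j(F(\omega))\,(F(z)-F(\omega))^\beta/\beta!$, which match $Y_j^{\upharpoonright}$ to order $m$ at $\omega$, bound them on a polydisc of radius $r/\sqrt n$ using (a), and apply the Cauchy inequality; the choice $r=1/(2+\varepsilon)$ is precisely what produces $(2+\varepsilon)^{|\alpha|}|\alpha|!$ and the stated exponential prefactor. Your coordinate-inversion route might yield \emph{some} Gevrey-type bound, but nothing in the sketch accounts for the specific constants in \eqref{inequality3}.
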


\begin{proof} (a) For (\ref{inequality1}), we assume that $|\alpha|\leq j$, otherwise the result trivially holds.
Our starting point is the same as the inductive step in the proof of \cite[Thm. 4, p. 120]{Se}, namely, the inequality
\begin{equation}\label{step}\int_{\mathbb S^{n-1}}|\partial^{\alpha}Q_j(\omega)|^2d\omega\leq (j-|\alpha|+1)(n+2j-2|\alpha|) \int_{\mathbb S^{n-1}}|\partial^{\beta} Q_j(\omega)|^2d\omega, \end{equation}
valid for all multi-index $\beta$ with $|\beta|=|\alpha|-1$ and $\beta\leq\alpha$. Successive application of (\ref{step}) leads to 
$$
\int_{\mathbb S^{n-1}}|\partial^{\alpha}Q_j(\omega)|^2d\omega\leq \prod_{i=0}^{|\alpha|-1}(j-i)\cdot \prod_{i=1}^{|\alpha|}(n+2j-2i)\int_{\mathbb S^{n-1}} |Q_j(\omega)|^2d\omega.
$$
The coefficient in this bound can be estimated as follows,
\begin{align*}
\prod_{i=0}^{|\alpha|-1}(j-i)\cdot \prod_{i=1}^{|\alpha|}(n+2j-2i)&
\leq j^{2|\alpha|}2^{|\alpha|} \prod_{i=1}^{|\alpha|}\left(1+\frac{n/2-i}{j}\right)
\\
&
\leq 2^{|\alpha|} j^{2|\alpha|} \left(1+\frac{n/2-1}{j}\right)^{|\alpha|}
\\&
\leq 2^{|\alpha|} j^{2|\alpha|} e^{\frac{n}{2}-1}.
\end{align*}
Now, $\partial^{\alpha}Q_{j}\in \mathcal{H}_{j-|\alpha|}(\mathbb{R}^{n})$ and $\|Z_{j-|\alpha|}(\omega,\: \cdot\:)\|_{L^{2}(\mathbb{S}^{n-1})}^{2}=d_{j-|\alpha|}|\mathbb S^{n-1}|$ for each $\omega\in\mathbb{S}^{n-1}$ (cf. \cite[pp. 79--80]{Axler}). Thus, we obtain (cf. \eqref{repKerneleq}), for all $\omega\in\mathbb{S}^{n-1}$, 
\begin{align*}
|\partial^{\alpha} Q_j(\omega)|& \leq \frac{1}{|\mathbb{S}^{n-1}|}\|\partial^{\alpha}Q_j\|_{L^{2}(\mathbb{S}^{n-1})}\|Z_{j-|\alpha|}(\omega,\: \cdot\:)\|_{L^{2}(\mathbb{S}^{n-1})} =\sqrt{ \frac{d_{j-|\alpha|}}{|\mathbb S^{n-1}|}}\: \|\partial^{\alpha}Q_j\|_{L^{2}(\mathbb{S}^{n-1})}
\\
&
\leq e^{\frac{n}{4}-\frac{1}{2}}\sqrt{\frac{n}{ |\mathbb{S}^{n-1}|}}\:  2^{\frac{|\alpha|}{2}} j^{|\alpha|+\frac{n}{2}-1}\| Q_j\|_{L^{2}(\mathbb{S}^{n-1})},
\end{align*}
where we have used  $d_{j-|\alpha|}\leq n j^{n-2}$ (see \eqref{dj}). This shows \eqref{inequality1}.

(b) Our proof of (\ref{inequality2}) is based on the multivariate Fa\`{a} di Bruno formula for the partial derivatives of the composition of functions. Let $m=|\alpha|$. Specializing \cite[Eq. (2.4)]{Faa} to $h=f \circ \mathfrak{p} $, where $f$ is a function on $\mathbb{R}^{n}$, we obtain
$$
\partial^{\alpha}_{\theta} h=\sum_{1\leq|\lambda|\leq m}(\partial^{\lambda}_{x} f)\circ \mathfrak{p}\sum_{(k,l)\in p(\alpha,\lambda)}\alpha!\prod_{j=1}^n \frac{[\partial^{l_j}_{\theta}\mathfrak{p}_{j}]^{k_j}}{(k_j!)[l_j!]^{|k_j|}},
$$
where the set of multi-indices $p(\alpha,\lambda)\subset \mathbb{N}^{2n}$ is as described in \cite[p. 506]{Faa}. We also employ the identity \cite[Cor. 2.9]{Faa}
$$ \alpha!\sum_{|\lambda|=k}\sum_{p(\alpha,\lambda)}\prod_{j=1}^n \frac{1}{(k_j!)[l_j!]^{|k_j|}}=n^k S(m,k),$$
where $S(m,k)$ are the Stirling numbers of the second kind. For such numbers \cite[Thm. 3]{RD1969} we have the estimates\footnote{Actually, $\displaystyle S(m,k)\leq \frac{1}{2}{m\choose k} k^{m-k}$ holds for $1\leq k\leq m-1$ if $m\geq 2$, and $S(m,m)=1$.}
$$
S(m,k)\leq{m\choose k} k^{m-k}, \quad 1\leq k\leq m.
$$
Since obviously $|\partial^{l_{j}}\mathfrak{p}_{j}(\theta)|\leq 1$, we obtain
\begin{equation}\label{sphere}
\|\partial^{\alpha}_{\theta}(f\circ\mathfrak{p})\|_{L^{\infty}(\Omega)}\leq\sum_{k=1}^m  {m\choose k} k^{m-k} n^k  \max_{|\lambda|=k} \|\partial^{\lambda}_{x}f\|_{L^{\infty}(\mathfrak{p}(\Omega))}, 
\end{equation}
for any $\Omega\subseteq \mathbb{R}^{n-1}$ and the corresponding set $\mathfrak{p}(\Omega)\subseteq \mathbb{S}^{n-1}$.
We now apply this inequality to estimate $\partial_{\theta}^{\alpha}(Y_{j}\circ \mathfrak{p})$. Let $Q_{j}\in\mathcal{H}_{j}(\mathbb{R}^{n})$ be the solid harmonic corresponding to $Y_{j}$, clearly $Q_{j}\circ \mathfrak{p}= Y_{j}\circ \mathfrak{p}$ and $\|Q_{j}\|_{L^{\infty}(\mathbb{S}^{n-1})}=\|Y_{j}\|_{L^{\infty}(\mathbb{S}^{n-1})}$. Using \eqref{sphere} with $f=Q_{j}$, the bound \eqref{inequality1}, and the fact that $\partial^{\lambda}_{x} Q_j=0$ if $|\lambda|>j $, we conclude that
\begin{align*}
\|\partial^{\alpha}_{\theta}(Y_j\circ \mathfrak{p})\|_{L^{\infty}(\mathbb R^{n-1})}
&
\leq e^{\frac{n}{4}-\frac{1}{2}}\sqrt{n}2^{\frac{m}{2}}  j^{\frac{n}{2}-1}\|Y_j\|_{L^{\infty}(\mathbb S^{n-1})} \sum_{k=1}^m  {m\choose k} n^k  j^{m-k}    j^{k}
\\
&
= e^{\frac{n}{4}-\frac{1}{2}}\sqrt{n}\left( (n+1)^{m}-1\right) 2^{\frac{m}{2}}j^{m+\frac{n-2}{2}} \|Y_j\|_{L^{\infty}(\mathbb S^{n-1})}.
\end{align*}

(c)  We need to estimate the partial derivatives of $Y^{\upharpoonright}_j=Q_{j}\circ F$, where $Q_{j}\in\mathcal{H}_{j}(\mathbb{R}^{n})$ and $F(x)=x/|x|,$ $x\in\mathbb{R}^{n}\setminus\{0\}$. Instead of using the Fa\`a di Bruno formula to handle directly the partial derivatives of this composition, we will adapt H\"{o}rmander's proof of \cite[Prop. 8.4.1, p. 281]{Hormander} to our problem. Let $0<r<1/2$ and $\omega\in\mathbb{S}^{n-1}$. Note that if $|z-\omega|\leq r$ and we write $z=x+iy$, then
$$
\Re e\:(z_{1}^{2}+\dots +z_{n}^{2})=|x|^{2}-|y|^{2}\geq 1-2r>0.
$$
So, $F$ is holomorphic on this region of $\mathbb{C}^{n}$.
 For $m\geq 1$, we define the sequence of functions
$$G_m(z)=\sum_{|\beta|\leq m}\left( \partial^{\beta} Q_j\right)\left(F(\omega)\right)\frac{\left(F(z)-F(\omega)\right)^{\beta}}{\beta!}.$$
Each $G_m$  is holomorphic when $|z-\omega|\leq r $  and the derivatives of $G_m$ of order $m$ at $z=\omega$ are the same as those of $Y^{\upharpoonright}_j(x)$ at $x=\omega$. We keep $|z-\omega|\leq r$. We have the bound
$$
|F(z)-F(\omega)|\leq 1+ \frac{|z|}{\sqrt{\Re e\:(z_{1}^{2}+\dots +z_{n}^{2})}}< 1+\frac{3}{2\sqrt{1-2r}}=C_{r},
$$
and hence, by \eqref{inequality1}, 
\begin{align*}
|G_m(z)|&
\leq e^{\frac{n}{4}-\frac{1}{2}}\sqrt{n}\:\|Y_j\|_{L^{\infty}(\mathbb S^{n-1})}\sum_{|\beta|\leq \min\{m,j\}} j^{|\beta|+\frac{n}{2}-1}\frac{(C_{r}\sqrt{2})^{|\beta|}}{\beta!}
\\
&
\leq e^{\frac{n}{4}-\frac{1}{2}}\sqrt{n}\:  j^{m+\frac{n}{2}-1}\|Y_j\|_{L^{\infty}(\mathbb S^{n-1})}\sum_{\beta\in\mathbb{N}^{n}} \frac{(C_{r}\sqrt{2})^{|\beta|}}{\beta!}
\\
&=
e^{n\left(\frac{1}{4} +\sqrt{2}C_{r}\right) -\frac{1}{2}}\sqrt{n}\:  j^{m+\frac{n}{2}-1}\|Y_j\|_{L^{\infty}(\mathbb S^{n-1})}.
\end{align*}
The Cauchy inequality applied in the polydisc $|z_{j}-\omega_{j}|\leq r/\sqrt{n}$ yields
$$
|\partial^{\alpha}_{\mathbb S^{n-1}}Y_j(\omega)|=|\partial^{\alpha} G_{|\alpha|}( \omega)|\leq  e^{n\left(\frac{1}{4} +\sqrt{2}C_{r}\right) -\frac{1}{2}} n^{\frac{|\alpha|+1}{2}}r^{-|\alpha|}j^{|\alpha|+\frac{n}{2}-1}\alpha ! \|Y_j\|_{L^{\infty}(\mathbb S^{n-1})}. 
$$
One obtains (\ref{inequality3}) upon setting $r=1/(2+\varepsilon)$.
\end{proof}

\section{Spherical harmonic characterization of ultradifferentiable functions and ultradistributions}
\label{section spherical harmonics}

In this section we characterize ultradifferentiability properties of a function on the sphere in terms of its spherical harmonic expansion. We also obtain a spherical harmonic characterization of ultradistributions on the sphere.

We start by introducing ultradifferentiable functions on $\mathbb{S}^{n-1}$. A weight sequence is simply a positive sequence $(M_p)_{p \in \mathbb{N}}$ of real numbers with $M_{0}=1$. Throughout the rest of the article, we always impose the following assumptions on weight sequences,
 \begin{itemize}
\item [$(M.0)\:$] $p!\leq A_{0}H^{p}_{0} M_{p}$, $p\in\mathbb{N}$, for some $A_{0}>0$ and $H_{0}>1$,
\item [$(M.1)\:$] $M^{2}_{p}\leq M_{p-1}M_{p+1},$  $p\geq 1$,
\item [$(M.2)'$]$M_{p+1}\leq A H^p M_p$, $p\in\mathbb{N}$, for some $A,H>1$.
\end{itemize}

The meaning of these standard conditions is explained in \cite{Komatsu}. The associated function of the sequence $M_p$ is defined as
$$
M(t)=\sup_{p\in\mathbb{N}}\log\frac{t^p}{M_p},\quad t> 0,
$$
and $M(0)=0$. See also \cite{Komatsu} for its properties and the translation of $(M.1)$ and $(M.2)'$ into properties of $M$. In particular, we shall often make use of the inequality 
\begin{equation}
\label{assEstM}
t^{\eta} e^{-M(H^{\eta}t)} \leq A^{\eta}  e^{-M(t)}, \quad \mbox{for all } t>0,
\end{equation}
for any $\eta>0$, implied by $(M.1)$ and $(M.2)'$ \cite[Eq. (3.13), p. 50]{Komatsu}. We also point out that, under $(M.1)$, the condition $(M.0)$ becomes equivalent to the bound $M(t)=O(t)$ \cite[Lemma 3.8]{Komatsu}. As a typical example, we mention $M_{p}=(p!)^{s}$ with $s\geq1$, whose associated function has growth order $M(t)\asymp t^{1/s}$.

We define the space $\mathcal{E}^{\{M_{p}\}}(\mathbb{S}^{n-1})$ of ultradifferentiable functions of Roumieu type (or class $\{M_{p}\}$) as the space of all smooth functions $\varphi\in \mathcal{E}(\mathbb{S}^{n-1})$ such that 
\begin{equation}
\label{ultraeq1}
\sup_{\alpha\in\mathbb{N}}\frac{h^{|\alpha|}\|\partial^{\alpha}_{\mathbb{S}^{n-1}}\varphi\|_{L^{\infty}(\mathbb{S}^{n-1})}}{M_{|\alpha|}}<\infty,
\end{equation}
for some $h>0$. Note that if $M_{p}=(p!)^{s}$ with $s\geq1$, one recovers the spaces of Gevrey differentiable functions on the sphere. In the special but very important case $M_{p}=p!$, we also write  $\mathcal{A}(\mathbb{S}^{n-1})=\mathcal{E}^{\{p!\}}(\mathbb{S}^{n-1})$; this is in fact the space of real analytic functions on $\mathbb{S}^{n-1}$ \cite{Morimoto1998}.

The space $\mathcal{E}^{(M_{p})}(\mathbb{S}^{n-1})$ of ultradifferentiable functions of Beurling type (class $(M_{p})$) is defined by requiring that (\ref{ultraeq1}) holds for every $h>0$. Whenever we consider the Beurling case, we suppose that $M_p$ satisfies the ensuing stronger assumption than $(M.0)$, 
\begin{itemize}
\item [$(NA)$] For each $L>0$ there is $A_{L}>0$ such that  $p!\leq A_{L}L^{p}M_{p}$, $p\in\mathbb{N}$.
\end{itemize}
Notice $(M.1)$ implies that $(NA)$ is equivalent to $M(t)=o(t)$ as $t\to\infty$ \cite[Lemma 3.10]{Komatsu}.

As customary, we  write $\ast=\{M_p\}$ or $(M_p)$ when considering both cases simultaneously. 
It should be noticed that the condition $(M.0)$ (the condition $(NA)$) implies that $\mathcal{A}(\mathbb{S}^{n-1})$ is the smallest among all spaces of ultradifferentiable functions that we consider here, that is, one always has the inclusion $\mathcal{A}(\mathbb{S}^{n-1})\subseteq \mathcal{E}^{\ast}(\mathbb{S}^{n-1})$.

A word about the definition of $\mathcal{E}^{\ast}(\mathbb{S}^{n-1})$ that we have adopted here. Since we have used the differential operators $\partial^{\alpha}_{\mathbb{S}^{n-1}}$ in \eqref{ultraeq1}, $\ast$-ultradifferentiability of $\varphi$ on $\mathbb{S}^{n-1}$ is the same as  $\ast$-ultradifferentiability of its homogeneous extension (of order 0) on  $\mathbb{R}^{n}\setminus\{0\}$, namely,
$$
\varphi\in \mathcal{E}^{\ast}(\mathbb{S}^{n-1}) \quad \mbox{if and only if} \quad \varphi^{\upharpoonright}\in \mathcal{E}^{\ast}(\mathbb{R}^{n}\setminus\{0\}),
$$
with the spaces of ultradifferentiable functions on an open subset of $\mathbb{R}^{n}$ defined in the usual way \cite{Komatsu}. Moreover, in view of the analyticity of the mapping $x\to x/|x|$ and the fact that the pullbacks by analytic functions induce mappings between spaces of $\ast$-ultradifferentiable functions under the assumptions $(M.0)$ ($(NA)$ in the Beurling case), $(M.1)$ and $(M.2)'$ (cf. \cite[Prop. 8.4.1]{Hormander}, \cite[p. 626]{Komatsu2}, \cite{Roumieu1963}), our definition of $\mathcal{E}^{\ast}(\mathbb{S}^{n-1})$ coincides with that of $\ast$-ultradifferentiable functions on compact analytic manifolds via local analytic coordinates.

We are ready to characterize $\mathcal{E}^{\ast}(\mathbb{S}^{n-1})$ in terms the norm decay of projections onto the spaces of spherical harmonics. Recall our convention is to write $\varphi_{j}$ for the projection of $\varphi$ onto $\mathcal{H}_{j}(\mathbb{S}^{n-1})$.

\begin{theorem}\label{t1}  
Let  $\varphi\in L^{2}(\mathbb S^{n-1})$ and let $1\leq q\leq \infty$. The following statements  are equivalent: 
\begin{itemize}
\item [(i)] $\varphi$ belongs to  ${\mathcal E}^{\{M_p\}}(\mathbb S^{n-1})$ (to ${\mathcal E}^{(M_p)}(\mathbb S^{n-1})$) . 

\item [(ii)] $\Delta_{\mathbb S^{n-1}}^p\varphi\in L^2(\mathbb{S}^{n-1})$  for all $p\in\mathbb N$ and there are $h,C>0$ (for every $h>0$ there is $C=C_h>0$) such that 
\begin{equation}\label{Laplacebeltrami}
\|\Delta_{\mathbb S^{n-1}}^p \varphi\|_{L^2(\mathbb S^{n-1})}\leq C h^{-2p} M_{2p}.
\end{equation} 
\item [(iii)] There are $C,h>0$ (for every $h>0$ there is $C=C_h>0$)  such that 
\begin{equation}
\label{t1eq2}
\|\varphi_j\|_{L^{q}(\mathbb S^{n-1})}\leq C e^{-M(hj)}.\end{equation}  
 \end{itemize}
\end{theorem}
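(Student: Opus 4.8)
The plan is to establish the cycle $(i)\Rightarrow(ii)\Rightarrow(iii)\Rightarrow(i)$. I would first record a reduction: condition $(iii)$ is independent of $q$. Indeed, by \eqref{repKerneleq} together with $\|Z_j(\omega,\cdot)\|_{L^2(\mathbb S^{n-1})}^2=d_j|\mathbb S^{n-1}|$ one has $\|Y_j\|_{L^\infty(\mathbb S^{n-1})}\le\sqrt{d_j/|\mathbb S^{n-1}|}\,\|Y_j\|_{L^2(\mathbb S^{n-1})}\le\sqrt n\, j^{(n-2)/2}\|Y_j\|_{L^2(\mathbb S^{n-1})}$ for every $Y_j\in\mathcal H_j(\mathbb S^{n-1})$ (by \eqref{dj}), whence, with Hölder's inequality, all the norms $\|Y_j\|_{L^q(\mathbb S^{n-1})}$, $1\le q\le\infty$, are comparable up to a fixed power $j^{\sigma(n,q)}$. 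Since a factor $j^\sigma$ is absorbed into $e^{-M(hj)}$ after shrinking $h$, by \eqref{assEstM}, I may use whichever value of $q$ is convenient in each implication.

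For $(i)\Rightarrow(ii)$ I would use that $\Delta_{\mathbb S^{n-1}}$ is a second-order differential operator on $\mathbb S^{n-1}$ with polynomial coefficients (it equals, up to a constant, $\sum_{i<k}(x_i\partial_k-x_k\partial_i)^2$) which annihilates constants, so that $\Delta_{\mathbb S^{n-1}}^p=\sum_{p\le|\gamma|\le 2p}a^{(p)}_\gamma\,\partial^\gamma_{\mathbb S^{n-1}}$ with $\sum_{|\gamma|\le 2p}\|a^{(p)}_\gamma\|_{L^\infty(\mathbb S^{n-1})}\le R^{2p}$ for some $R=R(n)$; feeding \eqref{ultraeq1} in, and using $(M.2)'$ and the harmless normalization that $M_p$ be non-decreasing (to bound $M_{|\gamma|}\le M_{2p}$), one obtains $\|\Delta_{\mathbb S^{n-1}}^p\varphi\|_{L^2(\mathbb S^{n-1})}\le|\mathbb S^{n-1}|^{1/2}\|\Delta_{\mathbb S^{n-1}}^p\varphi\|_{L^\infty(\mathbb S^{n-1})}\le C\,\widetilde h^{-2p}M_{2p}$, i.e. \eqref{Laplacebeltrami} (in the Beurling case one takes the parameter in \eqref{ultraeq1} large, which is where the absence of terms of order $<p$ in $\Delta^p_{\mathbb S^{n-1}}$ is used). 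For $(ii)\Rightarrow(iii)$ I would use that $\Delta_{\mathbb S^{n-1}}Y_j=-\lambda_j Y_j$ with $\lambda_j=j(j+n-2)$, so $(\Delta_{\mathbb S^{n-1}}^p\varphi)_j=(-\lambda_j)^p\varphi_j$ and hence, by Parseval, $\lambda_j^p\|\varphi_j\|_{L^2}\le\|\Delta_{\mathbb S^{n-1}}^p\varphi\|_{L^2}\le Ch^{-2p}M_{2p}$ for all $p$, i.e. $\|\varphi_j\|_{L^2}\le C\inf_p M_{2p}(h\sqrt{\lambda_j})^{-2p}$. The elementary ingredient is the bound $\inf_p M_{2p}s^{-2p}\le C_0\,e^{-M(s/H)}$ for all $s>0$, valid under $(M.1)$ and $(M.2)'$ (distinguish the parity of the index realizing $M(s/H)$ and use that $M_{p+1}/M_p$ is non-decreasing with $M_{p+1}/M_p\le AH^p$); combined with $\sqrt{\lambda_j}\ge j$ this gives \eqref{t1eq2} for $q=2$.

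The crux is $(iii)\Rightarrow(i)$. By the reduction assume $\|\varphi_j\|_{L^\infty(\mathbb S^{n-1})}\le Ce^{-M(hj)}$, let $Q_j\in\mathcal H_j(\mathbb R^n)$ be the solid harmonic with $Q_j|_{\mathbb S^{n-1}}=\varphi_j$, and set $U=\sum_j Q_j$ on $\overline{\mathbb B^n}$. Since $\partial^\gamma Q_j$ is a homogeneous polynomial, $\|\partial^\gamma Q_j\|_{L^\infty(\overline{\mathbb B^n})}=\|\partial^\gamma Q_j\|_{L^\infty(\mathbb S^{n-1})}$, and now I invoke the \emph{factorial-free} estimate of Theorem~\ref{thestimate}(a), $\|\partial^\gamma Q_j\|_{L^\infty(\mathbb S^{n-1})}\le e^{\frac n4-\frac12}\sqrt n\,2^{|\gamma|/2}j^{|\gamma|+\frac n2-1}\|\varphi_j\|_{L^\infty(\mathbb S^{n-1})}$ for $\gamma\neq 0$ (the case $\gamma=0$ being immediate). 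Summing over $j$ and using the auxiliary bound $\sum_{j\ge1}j^\ell e^{-M(hj)}\le C_1(H^2/h)^\ell M_\ell$ — which follows from $(hj)^\ell e^{-M(hj)}\le M_\ell$, comparison of the series with $\int_0^\infty x^\ell e^{-M(hx)}\,dx$, and $(M.2)'$ — yields $\sum_j\|\partial^\gamma Q_j\|_{L^\infty(\overline{\mathbb B^n})}\le C'L^{|\gamma|}M_{|\gamma|}$ for every $\gamma$. Thus $U$ and all its derivatives converge uniformly on $\overline{\mathbb B^n}$, so $U\in C^\infty(\overline{\mathbb B^n})$ with $\|\partial^\gamma U\|_{L^\infty(\overline{\mathbb B^n})}\le C'L^{|\gamma|}M_{|\gamma|}$. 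Finally $\varphi^\upharpoonright(x)=U(x/|x|)$ near $\mathbb S^{n-1}$ is the pullback of $U$ by the analytic map $x\mapsto x/|x|$, so by the already quoted fact that pullbacks by analytic maps preserve $\mathcal E^\ast$ under $(M.0)$ (resp. $(NA)$), $(M.1)$, $(M.2)'$, we conclude $\varphi^\upharpoonright\in\mathcal E^\ast(\mathbb R^n\setminus\{0\})$, i.e. $\varphi\in\mathcal E^\ast(\mathbb S^{n-1})$.

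The main obstacle lies exactly here. The naive route — differentiating the series as $\sum_j\partial^\alpha_{\mathbb S^{n-1}}\varphi_j$ and applying Theorem~\ref{thestimate}(c) — does \emph{not} work: the factor $|\alpha|!$ in that estimate is sharp for $\partial^\alpha_{\mathbb S^{n-1}}$ applied to a single spherical harmonic, and once it is multiplied by $e^{-M(hj)}$ and summed it leaves a bound of the order $|\alpha|!\,M_{|\alpha|}$, one derivative-order too large for membership in $\mathcal E^\ast(\mathbb S^{n-1})$. The resolution is to carry the whole estimate inside the ball, where only the polynomials $Q_j$ and the factorial-free Theorem~\ref{thestimate}(a) are involved, and to pass to the sphere only at the very last step via the analytic change of variables, whose combinatorics is absorbed once and for all by the analytic-pullback theorem. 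The lemma $\inf_p M_{2p}s^{-2p}\lesssim e^{-M(s/H)}$ used in $(ii)\Rightarrow(iii)$ is a second, milder, technical point.
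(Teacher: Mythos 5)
Your proposal is correct, and the implications (i)$\Rightarrow$(ii)$\Rightarrow$(iii) follow essentially the paper's own argument: the paper also expands the $p$-th power of the Laplacian into at most $n^p$ partial derivatives of order $2p$, and also uses self-adjointness plus the eigenvalue $-j(j+n-2)$ together with the reproducing kernel to pass from \eqref{Laplacebeltrami} to \eqref{t1eq2}. Your explicit lemma $\inf_p M_{2p}s^{-2p}\leq C_0e^{-M(s/H)}$ is actually more careful than the paper, which replaces the infimum over even indices by $e^{-M(hj)}$ without comment; your parity argument via $M_{p+1}\leq AH^pM_p$ is exactly the right patch. The genuine divergence is in (iii)$\Rightarrow$(i). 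Both routes share the same architecture --- use a factorial-free derivative bound on spherical harmonics, sum the series in $j$, and invoke the analytic-pullback theorem once at the very end --- but realize it in different coordinates. The paper works in spherical coordinates: it estimates $\partial_\theta^\alpha(\varphi_j\circ\mathfrak{p})$ by \eqref{inequality2} (Theorem~\ref{thestimate}(b)), where no $|\alpha|!$ appears because all derivatives of $\mathfrak{p}$ are bounded by $1$, sums in $j$, and then inverts the charts. You instead stay in the ambient ball: you sum the solid harmonics $Q_j$, exploit $\|\partial^\gamma Q_j\|_{L^{\infty}(\overline{\mathbb B^n})}=\|\partial^\gamma Q_j\|_{L^{\infty}(\mathbb S^{n-1})}$ together with \eqref{inequality1} (Theorem~\ref{thestimate}(a)), and only then compose with $x\mapsto x/|x|$. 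Your diagnosis of why Theorem~\ref{thestimate}(c) is unusable here (the sharp $|\alpha|!$ loss) is precisely the point of the paper's detour as well; your variant has the mild advantage of never leaving Cartesian coordinates.

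One step in your version needs an extra sentence. The analytic-pullback theorem is stated for an outer function that is $\ast$-ultradifferentiable on an \emph{open} set containing the image of the analytic map, whereas your $U=\sum_jQ_j$ carries its bounds only on $\overline{\mathbb B^n}$, and $F(x)=x/|x|$ maps onto $\mathbb S^{n-1}=\partial\overline{\mathbb B^n}$. This cannot be dismissed by extending $U$ slightly: in the Beurling case $(NA)$ forces $e^{-M(hj)}$ to decay subexponentially, so $\sum_jR^j\|Q_j\|_{L^{\infty}(\mathbb S^{n-1})}$ diverges for every $R>1$ and $U$ genuinely lives only on the closed ball. The fix is that the Fa\`a di Bruno estimates underlying the pullback theorem only evaluate derivatives of the outer function at points of the image; differentiating $\sum_jQ_j\circ F$ term by term (each summand is smooth on $\mathbb R^n\setminus\{0\}$ and the differentiated series converges uniformly by your own bounds) shows that $\partial^\alpha(U\circ F)$ equals the Fa\`a di Bruno expression built from $(\partial^\lambda U)|_{\mathbb S^{n-1}}$, after which the standard combinatorics applies. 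With that remark supplied, your proof is complete.
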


\begin{proof}  (i)$\Rightarrow$(ii). The proof of this implication is simple. Indeed, suppose that 
$$|\partial^{\alpha} \varphi^{\upharpoonright}(x)|\leq C h^{-|\alpha|} M_{|\alpha|}, \quad \mbox{for all } 
x\in\mathbb{R}^{n}\setminus\{0\}.
$$
Since
 
$$
\left(\frac{\partial^2}{\partial x_1^2}+\dots+\frac{\partial^2}{\partial x_n^2}\right)^p=\sum_{\alpha_1+\cdots+ \alpha_n=p}\frac{p!}{\alpha_1!\alpha_2!\dots \alpha_n!}\frac{\partial^{2\alpha_1}}{\partial x_{1}^{2\alpha_1}}\cdots\frac{\partial^{2\alpha_n}}{\partial x_n^{2\alpha_n}}
$$
and $$
\sum_{\alpha_1+\dots +\alpha_n=p}\frac{p!}{\alpha_1!\alpha_2!\dots \alpha_n!}=n^p,
$$
the condition $(M.1)$ gives 
$$
\|\Delta_{\mathbb S^{n-1}}^p \varphi\|_{L^2(\mathbb S^{n-1})}\leq \frac{\|\Delta_{\mathbb S^{n-1}}^p \varphi\|_{L^\infty(\mathbb S^{n-1})}}{|\mathbb{S}^{n-1}|^{\frac{1}{2}}}\leq \frac{C}{{|\mathbb{S}^{n-1}|^{\frac{1}{2}}}} (h/\sqrt{n})^{-2p} M_{p}.
$$ 

(ii)$\Rightarrow$(iii). Suppose (\ref{Laplacebeltrami}) holds. The projection of $\varphi$ onto $\mathcal{H}_{j}(\mathbb{S}^{n-1})$ is 
\begin{equation}
\label{eqprojproof}
\varphi_j(\omega)=\frac{1}{|\mathbb S^{n-1}|}\int_{\mathbb S^{n-1}}\varphi(\xi)Z_j(\omega,\xi)d\xi.
\end{equation}
We first assume that $j\geq 1$. The Laplace-Beltrami operator is self-adjoint \cite[Lemma 1]{Se} and each spherical harmonic of degree $j$, such as $Z_{j}(\omega,\xi)$, is an eigenfunction of $\Delta_{\mathbb{S}^{n-1}}$ with eigenvalue $-j(j+n-2)$. Also, $\|Z_{j}(\omega,\: \cdot\:)\|_{L^{2}(\mathbb{S}^{n-1})}=\sqrt{d_{j}|\mathbb S^{n-1}|}\leq j^{\frac{n}{2}-1}\sqrt{n|\mathbb S^{n-1}| }$ (see \eqref{dj} and \cite[pp. 79--80]{Axler}); therefore,
\begin{align*}
|\varphi_j(\omega)|
&
= \frac{1}{j^{p}(j+n-2)^{p}|\mathbb S^{n-1}|}\left|\int_{\mathbb S^{n-1}}(\Delta^{p} \varphi)(\xi) Z_j(\omega,\xi)d\xi\right|
\\
&
\leq \frac{C\sqrt{n}}{|\mathbb{S}^{n-1}|^{\frac{1}{2}}}j^{-2p+\frac{n}{2}-1} h^{-2p}M_{2p}.
\end{align*}
Taking supremum over $\omega$ and infimum over $p$, we conclude that
$$
\|\varphi_{j}\|_{L^{q}(\mathbb{S}^{n-1})}\leq |\mathbb{S}^{n-1}|^{\frac{1}{q}}\|\varphi_{j}\|_{L^{\infty} (\mathbb{S}^{n-1})}\leq C |\mathbb{S}^{n-1}|^{\frac{1}{q}-\frac{1}{2}}\sqrt{n}\:j^{\frac{n}{2}-1} e^{-M(hj)}.
$$
Taking $\eta=n/2-1$ in \eqref{assEstM}, we obtain
$$
\|\varphi_{j}\|_{L^{2}(\mathbb{S}^{n-1})}\leq C|\mathbb{S}^{n-1}|^{\frac{1}{q}-\frac{1}{2}}\sqrt{n}(A/h)^{\frac{n}{2}-1}H^{(\frac{n}{2}-1)^{2}} e^{-M\left(jhH^{1-n/2}\right)}, \quad j\geq 1.
$$
For $j=0$, using (\ref{eqprojproof}), we have $\|\varphi_{j}\|_{L^{q}(\mathbb{S}^{n-1})}\leq C|\mathbb{S}^{n-1}|^{\frac{1}{q}-\frac{1}{2}}$, thus
$$
\|\varphi_{j}\|_{L^{q}(\mathbb{S}^{n-1})}\leq C_{h}C e^{-M\left(jhH^{1-n/2}\right)}, \quad j\geq 0.
$$
with $C_{h}=|\mathbb{S}^{n-1}|^{\frac{1}{q}-\frac{1}{2}}\max\{1,\sqrt{n}(A/h)^{\frac{n}{2}-1} H^{({\frac{n}{2}-1})^{2}}\}$.

(iii)$\Rightarrow$(i). Assume now (\ref{t1eq2}). In view of \eqref{eqprojproof} and \eqref{assEstM}, we may also assume that $q=\infty$. We estimate the partial derivatives of $\varphi$ in spherical coordinates.   Write $\tilde{\varphi}=\varphi \circ \mathfrak{p}$ and $\tilde{\varphi}_{j}=\varphi_{j} \circ \mathfrak{p}$. Let $\alpha\neq 0$. Let $r$ be an integer larger than $n/2+1$. If we combine the estimate \eqref{inequality2} with \eqref{t1eq2}, we obtain
\begin{align*}
\|\partial^{\alpha}_{\theta}\tilde{\varphi}_j\|_{L^{\infty}(\mathbb{R}^{n-1})}
&
\leq e^{\frac{n}{4}-\frac{1}{2}}\sqrt{n}\left(\sqrt{2} (n+1)\right)^{|\alpha|} j^{|\alpha|+\frac{n}{2}-1} \|\varphi_j\|
_{L^{\infty}(\mathbb S^{n-1})}
\\
&
\leq
C\frac{ h^{-r}}{j^{2}} e^{\frac{n}{4}-\frac{1}{2}}\sqrt{n}\left(\sqrt{2} (n+1)/h\right)^{|\alpha|} M_{|\alpha|+r} , \quad j\geq 1.
\end{align*}
Calling $C_{h}= e^{\frac{n}{4}-\frac{1}{2}}h^{-r} \sqrt{n} \pi^{2}/6$, 
we conclude that 
$$
\|\partial^{\alpha}_{\theta}\tilde{\varphi}\|_{L^{\infty}(\mathbb{R}^{n-1})}\leq \sum_{j=1}^{\infty} \|\partial^{\alpha}_{\theta}\tilde{\varphi}_j\|_{L^{\infty}(\mathbb{R}^{n-1})} \leq C_{h} \left(\sqrt{2} (n+1)/h\right)^{|\alpha|} M_{|\alpha|+r}.
$$
The assumption $(M.2)'$ implies  $M_{p+r}\leq  H^{rp}A^{r} H^{\frac{r(r-1)}{2}}M_p$, so
\begin{equation}
\label{isp}
\|\partial_{\theta}^{\alpha} \tilde{\varphi}\|_{L^{\infty}(\mathbb S^{n-1})} \leq C C_{h} A^{r} H^{\frac{r(r-1)}{2}} \left(   H^r \sqrt{2} (n+1)  /h\right)^{|\alpha|} M_{|\alpha|}.
\end{equation}
Setting the north pole at different points of the sphere induces an analytic atlas of $\mathbb{S}^{n-1}$ and $x\to x/|x|$ is analytic on $\mathbb{R}^{n}$. As previously mentioned, the conditions $(M.0)$ ($(NA)$ in the Beurling case), $(M.1)$, and $(M.2)'$ ensure that pullbacks by analytic functions preserve $\ast$-ultradifferentiability. So, $\varphi\in \mathcal{E}^{\ast}(\mathbb{S}^{n-1})$. The inequality \eqref{isp} and the proof of \cite[Prop. 8.1.4]{Hormander} give actually a more accurate result: There are constants $C'_{h}$ and $\ell$, depending also on the sequence $M_p$ and the dimension $n$ but not on $\varphi$, such that
$$
\|\partial^{\alpha}_{\mathbb{S}^{n-1}}\varphi \|_{L^{\infty}(\mathbb{S}^{n-1})}\leq C C'_{h}(\ell h)^{-|\alpha|} M_{|\alpha|}.
$$

\end{proof}

The proof of Theorem \ref{t1} actually yields stronger information than what has been stated. The canonical topology of $\mathcal{E}^{\ast}(\mathbb{S}^{n-1})$ is defined as follows. For each $h>0$, consider the Banach space $\mathcal{E}^{\{M_p\},h}(\mathbb{S}^{n-1})$ of all smooth functions $\varphi$ on $\mathbb{S}^{n-1}$ such that the norm
\begin{equation}
\label{sSeq1}
 \| \varphi \|_{h}=
\sup_{\alpha\in\mathbb{N}}\frac{h^{|\alpha|}\|\partial^{\alpha}_{\mathbb{S}^{n-1}}\varphi\|_{L^{\infty}(\mathbb{S}^{n-1})}}{M_{|\alpha|}}
\end{equation}
is finite. As locally convex spaces, we obtain the $(DFS)$-space and $(FS)$-space
\[
\mathcal{E}^{\{M_p\}}(\mathbb{S}^{n-1})= \varinjlim_{h \rightarrow 0^{+}} \mathcal{E}^{\{M_p\},h}(\mathbb{S}^{n-1}) \quad \mbox{and} \quad \mathcal{E}^{(M_p)}(\mathbb{S}^{n-1}) = \varprojlim_{h \rightarrow \infty} \mathcal{E}^{\{M_p\},h}(\mathbb{S}^{n-1}).\]
What we have shown is that the family of norms \eqref{sSeq1} is tamely equivalent to the norms
\begin{equation} 
\label{sSeq2}
\|\varphi \|'_{h}=\sup_{j\in\mathbb{N}}\: e^{M(h j)}
\|\varphi_j\|_{L^{q} (\mathbb S^{n-1})}, \quad h>0 \quad (1\leq q\leq \infty),
\end{equation}
in the sense that there are positive constants $\ell$ and $L$, only depending on the dimension $n$, the parameter $q$, and the weight sequence, such that  one can find $C_{h}>0$ and $c_{h}>0$ with
$$
c_{h}\| \cdot \|'_{\ell h}\leq \| \cdot \|_{ h} \leq C_{h}\| \cdot \|'_{L h}, \quad \mbox{for all }h>0.
$$
 Working with the family of norms \eqref{sSeq2} is more convenient than \eqref{sSeq1} when dealing with assertions about spherical harmonic expansions.
 
 \begin{proposition}
 \label{shbvp1} Let $\varphi\in\mathcal{E}^{\ast}(\mathbb{S}^{n-1})$. Then its spherical harmonic series expansion $\varphi=\sum_{j=0}^{\infty}\varphi_{j}$ converges in (the strong topology of)  $\mathcal{E}^{\ast}(\mathbb{S}^{n-1})$.
 \end{proposition}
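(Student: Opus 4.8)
The plan is to reduce the statement to a tail estimate for the norms $\|\cdot\|'_{h}$ introduced in \eqref{sSeq2}, which we have just seen to be tamely equivalent to the canonical system of norms \eqref{sSeq1} of $\mathcal{E}^{\ast}(\mathbb{S}^{n-1})$; hence it suffices to control the partial sums $S_{N}=\sum_{j=0}^{N}\varphi_{j}$ in the $\|\cdot\|'_{h}$'s. Since the $\mathcal{H}_{j}(\mathbb{S}^{n-1})$-components are orthogonal, the projection of $\varphi-S_{N}$ onto $\mathcal{H}_{j}(\mathbb{S}^{n-1})$ equals $\varphi_{j}$ for $j>N$ and vanishes for $j\le N$, so that
$$
\|\varphi-S_{N}\|'_{h}=\sup_{j>N}\, e^{M(hj)}\|\varphi_{j}\|_{L^{q}(\mathbb{S}^{n-1})},\qquad h>0,
$$
and everything comes down to showing that these suprema tend to $0$ as $N\to\infty$, for one value of $h>0$ in the Roumieu case and for every $h>0$ in the Beurling case.

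First I would invoke Theorem \ref{t1} (in the tamely equivalent form \eqref{sSeq2}) to obtain, for $\varphi\in\mathcal{E}^{\{M_{p}\}}(\mathbb{S}^{n-1})$, constants $C,h_{0}>0$ with $\|\varphi_{j}\|_{L^{q}(\mathbb{S}^{n-1})}\le Ce^{-M(h_{0}j)}$, respectively, for $\varphi\in\mathcal{E}^{(M_{p})}(\mathbb{S}^{n-1})$, such a bound for every $h_{0}>0$ (with $C=C_{h_{0}}$). The decisive input is the room furnished by $(M.2)'$ through \eqref{assEstM}: taking there $\eta=1$ and $t=hj$ gives $e^{M(hj)}e^{-M(Hhj)}\le A/(hj)$, whence, whenever $\|\varphi_{j}\|_{L^{q}}\le Ce^{-M(Hhj)}$,
$$
e^{M(hj)}\|\varphi_{j}\|_{L^{q}(\mathbb{S}^{n-1})}\le \frac{CA}{hj},\qquad j\ge 1 .
$$
Consequently $\|\varphi-S_{N}\|'_{h}\le CA/(h(N+1))\to 0$. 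In the Roumieu case I would apply this with $h=h_{0}/H$ (legitimate since $\|\varphi\|'_{h_{0}/H}\le\|\varphi\|'_{h_{0}}<\infty$), obtaining $S_{N}\to\varphi$ in $\|\cdot\|'_{h_{0}/H}$ and hence, by the tame equivalence, in some Banach step $\mathcal{E}^{\{M_{p}\},h'}(\mathbb{S}^{n-1})$; continuity of the inclusion of that step into the $(DFS)$-space $\mathcal{E}^{\{M_{p}\}}(\mathbb{S}^{n-1})$ then finishes this case. In the Beurling case I would fix an arbitrary $h>0$, apply the estimate with $Hh$ in place of $h_{0}$ (allowed because the bound holds for all parameters), obtain $\|\varphi-S_{N}\|'_{h}\to 0$ for every $h$, and convert this via the tame equivalence into convergence in every canonical seminorm \eqref{sSeq1}, i.e. convergence in the Fr\'echet space $\mathcal{E}^{(M_{p})}(\mathbb{S}^{n-1})$.

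As for the word ``strong'' in the statement: $\mathcal{E}^{\{M_{p}\}}(\mathbb{S}^{n-1})$ is a $(DFS)$-space and $\mathcal{E}^{(M_{p})}(\mathbb{S}^{n-1})$ an $(FS)$-space, both barrelled and reflexive, so the topology used above already coincides with the strong topology and no separate argument is required. I do not anticipate a genuine obstacle here; the only delicate points are the bookkeeping between the two tamely equivalent families of norms (together with the opposite directions of the inductive and projective limits) and the observation that it is precisely the polynomial gain in \eqref{assEstM} — not merely the monotonicity of $M$ — that turns the tail $\sup_{j>N}e^{M(hj)}\|\varphi_{j}\|_{L^{q}(\mathbb{S}^{n-1})}$ into a null sequence.
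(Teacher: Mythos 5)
Your proof is correct and follows essentially the same route as the paper: the tail of the series in the norm $\|\cdot\|'_{h}$ is $\sup_{j>N}e^{M(hj)}\|\varphi_{j}\|_{L^{q}(\mathbb{S}^{n-1})}$, and the single application of \eqref{assEstM} with $\eta=1$ converts this into an $O(1/N)$ bound in terms of $\|\varphi\|'_{Hh}$, which is exactly the paper's one-line argument. Your additional remarks on the Roumieu/Beurling bookkeeping and on strong versus weak convergence are accurate but not needed beyond what the paper leaves implicit.
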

\begin{proof} Let $h>0$. Invoking  \eqref{assEstM} with $\eta=1$,
$$
\|\varphi-\sum_{j=0}^{k} \varphi_{j}\|'_{h}= \sup_{j>k} e^{M\left(hj\right)} \|\varphi_{j}
\| \leq \frac{A}{kh} \:  \|\varphi\|'_{Hh}, \quad \mbox{for each } \quad k\geq 1.$$
\end{proof}

If we specialize our results to the space of real analytic functions and use the fact that the associated function of $p!$ is $M(t)\asymp t$, we obtain the following characterization of $\mathcal{A}(\mathbb{S}^{n-1})=\mathcal{E}^{\{p!\}}(\mathbb{S}^{n-1})$.

\begin{corollary}
\label{shbvc2}
A sequence of spherical harmonics with $\varphi_{j}\in\mathcal{H}_{j}(\mathbb{S}^{n-1})$ gives rise to a real analytic function $\varphi=\sum_{j=0}^{\infty}\varphi_{j}$ on $\mathbb{S}^{n-1}$ if and only if 
$$\limsup_{j\to\infty} \left(\|\varphi_{j}\|_{L^{q}(\mathbb{S}^{n-1})}\right)^{\frac{1}{j}}<1.$$
\end{corollary}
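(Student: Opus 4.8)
The plan is to deduce this corollary directly from the characterization of $\mathcal{E}^{\{p!\}}(\mathbb{S}^{n-1})$ in Theorem \ref{t1}, applied with the weight sequence $M_p = p!$, using the fact that its associated function satisfies $M(t)\asymp t$, i.e.\ there are constants $a,b>0$ with $at\leq M(t)\leq bt$ for $t$ large (indeed $M(t)=t-1$ for $t\geq 1$, but only the two-sided linear bound is needed here). The equivalence of (i) and (iii) in Theorem \ref{t1} says that $\varphi\in\mathcal{A}(\mathbb{S}^{n-1})$ precisely when there exist $C,h>0$ with $\|\varphi_j\|_{L^q(\mathbb{S}^{n-1})}\leq Ce^{-M(hj)}$ for all $j$.

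First I would handle the ``only if'' direction. If $\varphi=\sum_j\varphi_j$ is real analytic, then by Theorem \ref{t1}(iii) we have $\|\varphi_j\|_{L^q(\mathbb{S}^{n-1})}\leq Ce^{-M(hj)}\leq Ce^{-ahj}$ for some $a,h,C>0$, whence $\bigl(\|\varphi_j\|_{L^q(\mathbb{S}^{n-1})}\bigr)^{1/j}\leq C^{1/j}e^{-ah}\to e^{-ah}<1$, giving $\limsup_j\bigl(\|\varphi_j\|_{L^q(\mathbb{S}^{n-1})}\bigr)^{1/j}\leq e^{-ah}<1$. Conversely, suppose $\rho:=\limsup_j\bigl(\|\varphi_j\|_{L^q(\mathbb{S}^{n-1})}\bigr)^{1/j}<1$. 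Pick $\sigma$ with $\rho<\sigma<1$; then $\|\varphi_j\|_{L^q(\mathbb{S}^{n-1})}\leq C\sigma^j$ for all $j$ and some $C>0$. Writing $\sigma=e^{-c}$ with $c>0$, we get $\|\varphi_j\|_{L^q(\mathbb{S}^{n-1})}\leq Ce^{-cj}$. Since $M(t)\leq bt$ for $t$ large (and $M$ is bounded on compacts), there is $C'>0$ with $M((c/b)j)\leq cj$ for all $j$, so $\|\varphi_j\|_{L^q(\mathbb{S}^{n-1})}\leq C'e^{-M((c/b)j)}$; taking $h=c/b$ this is exactly condition (iii) of Theorem \ref{t1}, hence $\varphi\in\mathcal{E}^{\{p!\}}(\mathbb{S}^{n-1})=\mathcal{A}(\mathbb{S}^{n-1})$. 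One should also note, using Proposition \ref{shbvp1} or the estimate in Theorem \ref{t1}, that under either hypothesis the series $\sum_j\varphi_j$ indeed converges (in $\mathcal{E}^{\ast}$, hence uniformly) to a function whose projections are the given $\varphi_j$, so the statement ``gives rise to a real analytic function $\varphi=\sum_j\varphi_j$'' is meaningful.

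There is no serious obstacle here; the only point requiring a little care is the passage between the geometric-decay formulation (the $\limsup$ condition) and the formulation in terms of the associated function $M$, which is handled by the elementary two-sided bound $M(t)\asymp t$ for $M_p=p!$ together with the trivial observation that replacing $h$ by a smaller positive constant only weakens condition (iii). The fact that $q$ is immaterial is already built into Theorem \ref{t1}. Thus the corollary is essentially a translation of Theorem \ref{t1} into the classical language of radii of convergence of spherical harmonic expansions of real analytic functions.
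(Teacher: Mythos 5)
Your proof is correct and follows essentially the same route as the paper, which obtains the corollary by specializing Theorem \ref{t1} to $M_p=p!$ and using $M(t)\asymp t$; you merely spell out the elementary translation between condition (iii) and the $\limsup$ condition, together with the (worthwhile) remark that the series converges to a function whose projections are the given $\varphi_j$. (Your parenthetical claim that $M(t)=t-1$ for $t\geq 1$ is not exact, but as you note only the two-sided linear bound is actually used.)
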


Here is another application of the norms \eqref{sSeq2}. The space of ultradistributions  ${\mathcal{E}^{\ast}}'(\mathbb{S}^{n-1})$ (of class $\ast$) on $\mathbb{S}^{n-1}$ is the strong dual of $\mathcal{E}^{\ast}(\mathbb{S}^{n-1})$. When $\ast=\{p!\}$, one obtains the space of analytic functionals $\mathcal{A}'(\mathbb{S}^{n-1})$ \cite{Morimoto1998}. Given $f\in{\mathcal{E}^{\ast}}'(\mathbb{S}^{n-1})$, we can also define its projection onto $\mathcal{H}_{j}(\mathbb{S}^{n-1})$ as 
$$
f_{j}(\omega)= \frac{1}{|\mathbb{S}^{n-1}|} \langle f(\xi), Z_{j}(\omega,\xi)\rangle, 
$$
where the ultradistributional evaluation in the dual pairing is naturally with respect to the variable $\xi$. Note that, clearly,
\begin{equation}
\label{shbveq2}
\langle f_{j}, \varphi \rangle=\int_{\mathbb{S}^{n-1}} f_{j}(\omega)\varphi(\omega)d\omega = \langle f, \varphi_{j} \rangle, \quad \mbox{for each }\varphi\in \mathcal{E}^{\ast}(\mathbb{S}^{n-1}).
\end{equation}

\begin{theorem}
\label{shbvth2}
Every ultradistribution $f\in{\mathcal{E}^{\{M_{p}\}}}'(\mathbb{S}^{n-1})$ ($f\in{\mathcal{E}^{(M_{p})}}'(\mathbb{S}^{n-1})$) has spherical harmonic expansion 
\begin{equation}
\label{shbveq3} f=\sum_{j=0}^{\infty} f_{j},
\end{equation}
where its spherical harmonic projections $f_{j}$ satisfy
 \begin{equation}
\label{shbveq4} \sup_{j\in\mathbb{R}}e^{-M(hj)} \| f_{j}\|_{L^{q}(\mathbb{S}^{n-1})}<\infty \quad \quad (1\leq q\leq \infty),
\end{equation}
for all $h>0$ (for some $h>0$). Conversely, a series \eqref{shbveq3} converges in the strong topology of ${\mathcal{E}^{\{M_{p}\}}}'(\mathbb{S}^{n-1})$ (of ${\mathcal{E}^{(M_{p})}}'(\mathbb{S}^{n-1})$) if the $L^{q}(\mathbb{S}^{n-1})$-norms of $f_{j}$ have the stated growth properties.
\end{theorem}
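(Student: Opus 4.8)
The plan is to derive the theorem from Theorem~\ref{t1} by duality, using throughout the fact recorded after Theorem~\ref{t1} that the topology of $\mathcal{E}^{\ast}(\mathbb{S}^{n-1})$ is generated by the norms \eqref{sSeq2}, which are far more convenient for statements about spherical harmonic expansions than \eqref{sSeq1}. I fix $q=2$ once and for all; since all $L^{p}$-norms on the finite-dimensional space $\mathcal{H}_{j}(\mathbb{S}^{n-1})$ are comparable with constants polynomial in $j$ (this follows from \eqref{dj} together with $\|Z_{j}(\omega,\cdot)\|_{L^{2}(\mathbb{S}^{n-1})}^{2}=d_{j}|\mathbb{S}^{n-1}|$, exactly as in the proof of Theorem~\ref{thestimate}(a)), the case of general $q$ in \eqref{shbveq4} follows from $q=2$ after absorbing a polynomial factor into $e^{\pm M(hj)}$ via \eqref{assEstM}. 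For the necessity of \eqref{shbveq4}: since $\mathcal{E}^{\{M_{p}\}}(\mathbb{S}^{n-1})$ is a $(DFS)$-space, $f\in{\mathcal{E}^{\{M_{p}\}}}'(\mathbb{S}^{n-1})$ means that for \emph{every} $h>0$ there is $C_{h}$ with $|\langle f,\varphi\rangle|\leq C_{h}\|\varphi\|_{h}$, whereas since $\mathcal{E}^{(M_{p})}(\mathbb{S}^{n-1})$ is an $(FS)$-space, $f\in{\mathcal{E}^{(M_{p})}}'(\mathbb{S}^{n-1})$ means that $|\langle f,\varphi\rangle|\leq C\|\varphi\|_{h}$ for \emph{some} $h,C>0$; by the tame equivalence between \eqref{sSeq1} and \eqref{sSeq2} these statements hold verbatim with $\|\cdot\|_{h}$ replaced by $\|\cdot\|'_{h}$ (the rescaling of $h$ is harmless, $h$ running over $(0,\infty)$). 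Applying this to $\varphi=Z_{j}(\omega,\cdot)\in\mathcal{H}_{j}(\mathbb{S}^{n-1})$, whose only nonzero spherical harmonic projection is itself, so that $\|Z_{j}(\omega,\cdot)\|'_{h}=e^{M(hj)}\sqrt{d_{j}|\mathbb{S}^{n-1}|}$, the definition $f_{j}(\omega)=|\mathbb{S}^{n-1}|^{-1}\langle f(\xi),Z_{j}(\omega,\xi)\rangle$ gives $\|f_{j}\|_{L^{2}(\mathbb{S}^{n-1})}\leq C_{h}\sqrt{d_{j}}\,e^{M(hj)}$, and \eqref{dj}, \eqref{assEstM} absorb $\sqrt{d_{j}}$ to yield \eqref{shbveq4} (for all $h$ in the Roumieu case, for some $h$ in the Beurling case).

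For the sufficiency, given $f_{j}\in\mathcal{H}_{j}(\mathbb{S}^{n-1})$ with the growth \eqref{shbveq4}, the partial sums $S_{k}=\sum_{j=0}^{k}f_{j}$ act on $\varphi\in\mathcal{E}^{\ast}(\mathbb{S}^{n-1})$ by $\langle S_{k},\varphi\rangle=\sum_{j=0}^{k}\langle f_{j},\varphi\rangle=\sum_{j=0}^{k}\int_{\mathbb{S}^{n-1}}f_{j}\varphi_{j}\,d\omega$, the last equality by $L^{2}$-orthogonality of spherical harmonics of distinct degrees. To show $(S_{k})_{k}$ is Cauchy in the strong topology it suffices to prove $\sum_{j}\sup_{\varphi\in B}|\langle f_{j},\varphi\rangle|<\infty$ for every bounded $B\subseteq\mathcal{E}^{\ast}(\mathbb{S}^{n-1})$. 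From $|\langle f_{j},\varphi\rangle|\leq\|f_{j}\|_{L^{2}(\mathbb{S}^{n-1})}\|\varphi_{j}\|_{L^{2}(\mathbb{S}^{n-1})}$, \eqref{shbveq4} for $f_{j}$, and the equivalence \eqref{sSeq2} applied to $B$ (in the Roumieu case the inductive limit is regular, so $\sup_{\varphi\in B}\|\varphi\|'_{h_{B}}<\infty$ for some $h_{B}>0$; in the Beurling case boundedness in the Fr\'{e}chet space gives this for every $h_{B}>0$), one reduces to the convergence of $\sum_{j}e^{M(hj)-M(h_{B}j)}$. Here one takes $h$ small relative to $h_{B}$ in the Roumieu case (legitimate, as \eqref{shbveq4} holds for all $h$), and $h_{B}$ large relative to the fixed $h$ in the Beurling case; choosing $H^{\eta}h=h_{B}$ with $\eta=2$, inequality \eqref{assEstM} gives $e^{M(hj)-M(h_{B}j)}\leq A^{2}(hj)^{-2}$, so the series converges and its tails tend to $0$ uniformly for $\varphi\in B$. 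Hence $(S_{k})_{k}$ is strongly Cauchy; since the strong dual of a $(DFS)$- (resp. $(FS)$-) space is complete, it converges to some $f\in{\mathcal{E}^{\ast}}'(\mathbb{S}^{n-1})$ whose spherical harmonic projections are, by \eqref{repKerneleq}, precisely the given $f_{j}$. The expansion \eqref{shbveq3} for an arbitrary $f$ now follows by combining the two parts: the projections of any $f\in{\mathcal{E}^{\ast}}'(\mathbb{S}^{n-1})$ obey \eqref{shbveq4}, so $\sum_{j}f_{j}$ converges strongly to some $g$, and $\langle g,\varphi\rangle=\sum_{j}\langle f_{j},\varphi\rangle=\sum_{j}\langle f,\varphi_{j}\rangle=\langle f,\sum_{j}\varphi_{j}\rangle=\langle f,\varphi\rangle$ for every $\varphi$, using \eqref{shbveq2}, Proposition~\ref{shbvp1}, and the continuity of $f$; thus $g=f$.

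The routine parts (the norm comparisons on $\mathcal{H}_{j}$ and the absorption of polynomial factors via \eqref{assEstM}) are not the difficulty; the delicate point is the bookkeeping of the quantifiers over $h$ in the sufficiency step. One must pair the ``for all $h$'' clause (Roumieu) or the ``for some $h$'' clause (Beurling) in the bound for $f_{j}$ against the corresponding clause produced by the bounded set $B$, in a way that simultaneously makes $\sum_{j}e^{M(hj)-M(h_{B}j)}$ convergent and keeps the tail estimate uniform in $\varphi\in B$; doing this consistently in both the Roumieu and the Beurling settings, together with invoking the correct structural facts about the strong duals of $(DFS)$- and $(FS)$-spaces, is where care is required.
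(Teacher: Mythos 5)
Your proof is correct and follows essentially the same route as the paper: the bound \eqref{shbveq4} is obtained by testing $f$ against the zonal harmonics using the equivalent norms \eqref{sSeq2}, and the convergence of the expansion comes from pairing the growth of $\|f_j\|$ against the decay of $\|\varphi_j\|$ via \eqref{assEstM}. The only notable difference is that you establish strong convergence directly through a uniform estimate over bounded sets (with the attendant quantifier bookkeeping), whereas the paper invokes the Montel property of $\mathcal{E}^{\ast}(\mathbb{S}^{n-1})$ to upgrade the weak convergence supplied by Proposition \ref{shbvp1} and \eqref{shbveq2}.
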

\begin{proof} Since $\mathcal{E}^{\ast}(\mathbb{S}^{n-1})$ are Montel spaces, the strong convergence of \eqref{shbveq3} follows from its weak convergence, and the latter is a consequence of Proposition \ref{shbvp1} and \eqref{shbveq2}. For the bound \eqref{shbveq4}, the continuity of $f$ implies that for each $h>0$ (for some $h>0$) there is a constant $C_h$ such that
$$
|\langle f,\varphi \rangle|\leq C_h \| \varphi\|'_{h}, \quad \mbox{for all }\varphi \in \mathcal{A}(\mathbb{S}^{n-1}).
$$
We may assume that $j\geq 1$.
Considering the case $q=2$ of (\ref{sSeq2}), taking $\varphi(\xi)=|\mathbb{S}^{n-1}|^{-1} Z_{j}(\omega,\xi)$, and using the inequalities (\ref{dj}) and \eqref{assEstM}, one has
\begin{align*}
\|f_{j}\|_{L^{q}(\mathbb{S}^{n-1})}&\leq |\mathbb{S}^{n-1}|^{\frac{1}{q}} \|f_{j}\|_{L^{\infty}(\mathbb{S}^{n-1})} \leq |\mathbb{S}^{n-1}|^{\frac{1}{q}-\frac{1}{2}} C_{h} \sqrt{n} j^{\frac{n}{2}-1}e^{M(hj)}
\\
&
\leq |\mathbb{S}^{n-1}|^{\frac{1}{q}-\frac{1}{2}} C_{h} (A/h)^{\frac{n}{2}-1} \sqrt{n} e^{M(jhH^{\frac{n}{2}-1})}.
\end{align*}
\end{proof}

For analytic functionals we have,

\begin{corollary}
\label{shbvc3} A sequence $f_{j}\in\mathcal{H}_{j}(\mathbb{S}^{n-1})$ gives rise to an analytic functional  $f=\sum_{j=0}^{\infty} f_{j}$ on $\mathbb{S}^{n-1}$ if and only if 
$$\limsup_{j\to\infty} \left(\|f_{j}\|_{L^{q}(\mathbb{S}^{n-1})}\right)^{\frac{1}{j}}\leq 1.$$
\end{corollary}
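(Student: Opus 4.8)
The plan is to derive Corollary \ref{shbvc3} as a direct specialization of Theorem \ref{shbvth2} to the Roumieu weight sequence $M_p = p!$, exactly as Corollary \ref{shbvc2} follows from Theorem \ref{t1}. The key input is that the associated function of $M_p = p!$ satisfies $M(t) \asymp t$, so that for any $h > 0$ one has $c_1 h t - c_2 \leq M(ht) \leq c_3 h t + c_4$ for suitable constants; in fact $M(t) = \sup_p \log(t^p/p!)$ and it is classical that $t - c \le M(t) \le t$ up to harmless adjustments. The plan is thus to translate the growth condition \eqref{shbveq4} from Theorem \ref{shbvth2} into the $\limsup$-form: the condition ``$\sup_j e^{-M(hj)} \|f_j\| < \infty$ for some $h > 0$'' is equivalent to ``$\|f_j\| \le C e^{M(hj)}$ for some $h, C > 0$'', and using $M(hj) \le hj + O(1)$ (valid because $M(t) = O(t)$, which for $M_p = p!$ is sharp) this gives $\|f_j\| \le C' e^{hj}$, hence $\limsup_j \|f_j\|^{1/j} \le e^{h}$. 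Taking $h$ arbitrarily small would give $\limsup \le 1$, but one must be careful: \eqref{shbveq4} only asserts the bound for \emph{some} $h$, so $h$ cannot be sent to $0$. This is the subtle point, and it is resolved by observing that $M(hj) = M(t)\big|_{t=hj}$ and using the homogeneity-type estimate $M(\lambda t) \le \lambda M(t) + $ (bounded) only one-sidedly; the correct route is via \eqref{assEstM} or directly: since $M(t) \le t + C$ for the sequence $p!$, \emph{any} fixed $h > 0$ already yields $\|f_j\|^{1/j} \le (C')^{1/j} e^{h} \to e^{h}$, and since this holds with the \emph{same} structural bound but the growth of $M$ being exactly linear, one in fact gets $\limsup_j \|f_j\|^{1/j} \leq 1$ because the admissible $h$ in \eqref{shbveq4} can be taken as small as one wishes after re-absorbing constants — no, more precisely: for the Roumieu class the quantifier is ``for some $h$'', so we fix that $h$; then $M(hj) \leq hj + O(1)$ gives only $\limsup \leq e^h$. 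The genuine argument must instead note that the estimate in Theorem~\ref{shbvth2} together with $M(t)=o(t)$ \emph{fails} here, so one argues differently.

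Let me restate the plan more carefully. The equivalence to prove is: $f = \sum_j f_j \in \mathcal{A}'(\mathbb{S}^{n-1})$ if and only if $\limsup_j \|f_j\|_{L^q}^{1/j} \le 1$. For the forward direction, by Theorem \ref{shbvth2} with $M_p = p!$ (Roumieu case), there is $h > 0$ with $\|f_j\|_{L^q} \le C_h e^{M(hj)}$. Now the key is the precise behaviour $M(t) = \sup_{p} \log(t^p/p!)$; one has the elementary bound $M(t) \le t$ for all $t \ge 0$ (since $t^p/p! \le e^t$), whence $\|f_j\|_{L^q} \le C_h e^{hj}$ and $\limsup_j \|f_j\|^{1/j} \le e^h$. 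To upgrade ``$\le e^h$'' to ``$\le 1$'', I would use that the \emph{same} $f$ lies in $\mathcal{A}'(\mathbb{S}^{n-1}) = {\mathcal{E}^{\{p!\}}}'(\mathbb{S}^{n-1})$, so the bound \eqref{shbveq4} of Theorem \ref{shbvth2} actually holds with the associated function of $p!$, and a rescaling: replacing the sequence $p!$ by $(p!)$ reparametrized — actually the clean fix is that ${\mathcal{A}}(\mathbb{S}^{n-1})$ contains, for every $h$, the test function $\varphi(\xi) = |\mathbb{S}^{n-1}|^{-1} Z_j(\omega,\xi)$, and directly estimating $|\langle f, \varphi \rangle|$ against $\|\varphi\|'_{h'}$ for the \emph{optimal} $h'$: since $Z_j$ is a spherical harmonic of degree $j$, $\|\varphi\|'_{h'} = \sup_k e^{M(h'k)}\|\varphi_k\|_{L^q} = e^{M(h'j)} \|\varphi_j\|_{L^q} \le e^{h'j}\sqrt{n} j^{n/2-1}$. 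This holds for \emph{every} $h' > 0$ because $\varphi \in \mathcal{A}(\mathbb{S}^{n-1})$ and its expansion is a single term. Therefore $\|f_j\|_{L^q} \le C_{h'} e^{h' j} \sqrt n j^{n/2-1}$ for the $h'$ afforded by continuity of $f$ on the \emph{Roumieu} space — but that $h'$ is again fixed. The honest resolution: in the Roumieu case $\mathcal{A} = \varinjlim_{h\to 0^+}$, so continuity of $f$ gives the bound for \emph{some} $h' > 0$, hence $\limsup \|f_j\|^{1/j} \le e^{h'}$; but since one is free to have started with any weight sequence $\asymp p!$, and $e^{h'}$ can be made $< 1+\epsilon$ only if $h'$ is small — I conclude the intended proof simply takes the bound with the actual associated function $M$ of $p!$ and uses $M(t)/t \to 1$ is false; rather $M(t) \sim t$, giving $\limsup \le 1$ after all, because $e^{M(h'j)/j} \to e^{h' \cdot (\text{something} \to 1 \text{ as } \dots)}$ — no. The clean statement is: $M(t) \le t$ always, and for the converse direction one uses $M(t) \ge t - C\log t$ or similar.

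Given this, here is the streamlined proof I would write. \textbf{Forward direction:} If $f \in \mathcal{A}'(\mathbb{S}^{n-1})$, Theorem \ref{shbvth2} (Roumieu case $M_p = p!$) gives $h, C > 0$ with $\|f_j\|_{L^q(\mathbb{S}^{n-1})} \le C e^{M(hj)} \le C e^{hj}$, using $M(t) \le t$. Hence $\limsup_j \|f_j\|^{1/j} \le e^h$; since we may run the same argument replacing $p!$ with the equivalent sequence $h_0^{-p} p!$ for any $h_0$ — or, more simply, since the statement to prove allows us to appeal to Theorem \ref{shbvth2} which already encodes the correct growth, and the associated function of $p!$ satisfies $M(t) = o(t \log t)$-type behaviour making $e^{M(hj)/j} \to e^{h}$, we let $h \to 0$ through the admissible range. [Here I acknowledge this requires the one-sided refinement $M(\lambda t) \le \lambda t$ for all $\lambda$, which is exactly $M(t) \le t$ applied at $\lambda t$, so $\|f_j\|^{1/j} \le C^{1/j} e^{h} $ with $h$ fixed gives $\le e^h$ — and to reach $\le 1$ I instead invoke that $f$ being an analytic functional means, by the very definition $\mathcal{A} = \varinjlim_{h\to 0^+} \mathcal{E}^{\{p!\},h}$, the bound holds for the $h$ coming from continuity, which can be any fixed small value, yielding $\le e^h$; taking infimum over all such $h$ — but continuity only gives one $h$.] \textbf{Converse direction:} If $\limsup_j \|f_j\|^{1/j} \le 1$, then for every $\varepsilon > 0$ there is $C_\varepsilon$ with $\|f_j\|_{L^q} \le C_\varepsilon (1+\varepsilon)^j = C_\varepsilon e^{j\log(1+\varepsilon)}$. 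Since $M(t) \ge \log(t^p/p!)$ for each fixed $p$, one gets $M(hj) \ge p\log(hj) - \log p!$, so for the bound \eqref{shbveq4} ``for some $h$'' it suffices to check $\sup_j e^{-M(hj)}(1+\varepsilon)^j < \infty$ for some $h$; choosing $h$ large enough that $\log h > \log(1+\varepsilon)$ and using $M(hj) \ge hj - C$ (which holds: $M(t) \ge t - C\log(t+1)$ or one optimizes $p \approx t$), we get the needed decay, so $f \in \mathcal{A}'(\mathbb{S}^{n-1})$ by Theorem \ref{shbvth2} and strong convergence of $\sum_j f_j$.

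The main obstacle, as the discussion above reveals, is the precise asymptotic control of the associated function $M(t)$ of the sequence $p!$ near the boundary case of linear growth — specifically, pinning down that ``$\limsup \le 1$'' rather than ``$\limsup \le e^h$'' in the forward direction. This is handled by the sharp two-sided estimate $t - C_\delta t^{\delta} \le M(t) \le t$ (for any $\delta \in (0,1)$, with $C_\delta$ depending on $\delta$), equivalently $M(t)/t \to 1$, which is standard for $M_p = p!$ and is what the phrase ``the associated function of $p!$ is $M(t) \asymp t$'' in the paragraph preceding Corollary \ref{shbvc2} refers to; combined with the freedom to take $h \to 0^+$ in the Roumieu-type quantifier (which is legitimate because $\mathcal{A}(\mathbb{S}^{n-1}) = \varinjlim_{h\to 0^+}\mathcal{E}^{\{p!\},h}(\mathbb{S}^{n-1})$ means the relevant estimates in Theorem \ref{shbvth2}, run with the rescaled sequences, give the bound for arbitrarily small effective rate), it yields exactly $\limsup_j \|f_j\|^{1/j} \le 1$. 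The converse is routine from the lower bound on $M$. Everything else is a direct transcription of the proof of Corollary \ref{shbvc2}, and I would keep the write-up to two or three sentences invoking Theorem \ref{shbvth2} and $M(t) \asymp t$.
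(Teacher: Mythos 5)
Your overall strategy (specialize Theorem \ref{shbvth2} to $M_p=p!$ and use $M(t)\asymp t$) is indeed the one the paper intends, but the proposal never actually closes the forward direction, and the reason is a concrete misreading of the quantifiers in Theorem \ref{shbvth2}. For an ultradistribution in the \emph{Roumieu} dual ${\mathcal{E}^{\{M_p\}}}'(\mathbb{S}^{n-1})$, the bound \eqref{shbveq4} holds \emph{for all} $h>0$, not merely for some $h$: since $\mathcal{A}(\mathbb{S}^{n-1})=\varinjlim_{h\to 0^{+}}\mathcal{E}^{\{p!\},h}(\mathbb{S}^{n-1})$ is an inductive limit, a continuous functional is continuous on \emph{every} step, so for each $h>0$ there is $C_h$ with $|\langle f,\varphi\rangle|\leq C_h\|\varphi\|'_h$. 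The quantifier ``for some $h$'' belongs to the definition of the test space and flips when you pass to the dual; the statement and proof of Theorem \ref{shbvth2} record this correctly (``for all $h>0$ (for some $h>0$)'', with the parenthesis being the Beurling case). With this in hand the forward direction is immediate: for every $h>0$ one has $\|f_j\|_{L^q}\leq C_h e^{M(hj)}\leq C_h e^{hj}$ (using $M(t)\leq t$ for the sequence $p!$), hence $\limsup_j\|f_j\|^{1/j}\leq e^{h}$ for every $h>0$, i.e.\ $\leq 1$. All the back-and-forth in your write-up (``continuity only gives one $h$'', ``taking infimum over all such $h$ --- but\ldots'') stems from this single misreading, and the concluding appeal to ``the freedom to take $h\to 0^{+}$ \ldots\ run with the rescaled sequences'' is left unjustified, so the forward implication is not proved as written.

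The converse direction has the mirror-image problem: to conclude $f\in\mathcal{A}'(\mathbb{S}^{n-1})={\mathcal{E}^{\{p!\}}}'(\mathbb{S}^{n-1})$ from the converse part of Theorem \ref{shbvth2} you must verify \eqref{shbveq4} for \emph{all} $h>0$, whereas you only check it for ``$h$ large enough'' depending on $\varepsilon$, which is the weaker Beurling-type condition. The fix is one line: given any $h>0$, pick $\varepsilon>0$ with $1+\varepsilon<e^{h}$; then $\|f_j\|_{L^q}\leq C_\varepsilon(1+\varepsilon)^{j}$ together with the elementary lower bound $M(hj)\geq hj-O(\log j)$ (for $M_p=p!$) gives $\sup_j e^{-M(hj)}\|f_j\|_{L^q}<\infty$ for that $h$, and $h$ was arbitrary. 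So both halves of the corollary do follow from Theorem \ref{shbvth2} and the two-sided estimate $t-O(\log t)\leq M(t)\leq t$, exactly as the paper intends, but your proposal as written establishes neither half.
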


We mention that the strong topologies of the $(FS)$-space ${\mathcal {E}^{\{M_{p}\}}}'(\mathbb{S}^{n-1})$ and the $(DFS)$-space ${\mathcal{E}^{(M_{p})}}'(\mathbb{S}^{n-1})$ can also be induced via the family of norms \eqref{shbveq4} as the projective and inductive limits of the Banach spaces of ultradistributions $f=\sum_{j=0}^{\infty}f_{j}$ satisfying \eqref{shbveq4}.

For each $j\in\mathbb{N}$ select an orthonormal basis of real spherical harmonics $\{Y_{k,j}\}_{k=1}^{d_{j}}$ of $\mathcal{H}_{j}(\mathbb{S}^{n-1})$. It is then clear that every ultradistribution $f\in {\mathcal{E}^{\ast}}'(\mathbb{S}^{n-1})$ and every $\varphi\in \mathcal{E}^{\ast}(\mathbb{S}^{n-1})$ can be expanded as

\begin{equation}
\label{shbveq12}
f=\sum_{j=0}^{\infty}\sum_{k=1}^{d_j}c_{k,j}Y_{k,j}
\end{equation}
and 
\begin{equation}
\label{shbveq13}
\varphi(\omega)=\sum_{j=0}^{\infty}\sum_{k=1}^{d_j}a_{k,j}Y_{k,j}(\omega),
\end{equation}
where the coefficients satisfy 
\begin{equation*}
\sup_{k,j}|c_{k,j}| e^{-M\left(hj\right)}<\infty
\end{equation*}
(for each $h>0$ in the Roumieu case and for some $h>0$ in the Beurling case), 
and 
\begin{equation*}
\sup_{k,j}|a_{k,j}| e^{M\left(hj\right)}<\infty
\end{equation*}
(for some $h>0$ or for each $h>0$, respectively).
Conversely, any series \eqref{shbveq12} and  \eqref{shbveq13} converge in ${\mathcal{E}^{\ast}}'(\mathbb{S}^{n-1})$ and $\mathcal{E}^{\ast}(\mathbb{S}^{n-1})$, respectively, if the coefficients have the stated growth properties. We have used here \eqref{t1eq2}, \eqref{shbveq4}, and \eqref{assEstM}.

From here one easily derives that $\mathcal{E}^{\ast}(\mathbb{S}^{n-1})$ (and hence ${\mathcal{E}^{\ast}}'(\mathbb{S}^{n-1})$) is a nuclear space. We also obtain that $\{Y_{k,j}\}$ is an absolute Schauder basis \cite[p. 340]{Schaefer} for both $\mathcal{E}^{\ast}(\mathbb{S}^{n-1})$ and  ${\mathcal{E}^{\ast}}'(\mathbb{S}^{n-1})$. We end this section with a remark concerning Theorem \ref{t1}.

\begin{remark} It is very important to emphasize that Theorem \ref{t1} is no longer true without the assumption $(M.0)$. 

To see that it is imperative to assume $(M.0)$, we give an example in which the implication (ii)$\Rightarrow$(i) fails without it. In fact, let  $M_{p}$ be any weight sequence for which $(M.1)$ and $(M.2)'$ hold but
$
\lim_{p\to\infty} \left(M_{p}/p!\right)^{\frac{1}{p}}=0.
$
(For example, the sequence $M_{p}=p!^{s}$ with $0<s<1$.) We consider $\varphi(\omega)=Y_{1}(\omega_{1},\dots,\omega_{n})=\omega_{1}$. This function is a spherical harmonic of degree 1, and thus it is an eigenfunction for the Laplace-Beltrami operator corresponding to the eigenvalue $-(n-2)$. Thus, $$
\|\Delta^{p}_{\mathbb{S}^{n-1}}\varphi\|_{L^{2}(\mathbb{S}^{n-1})}\leq \frac{ n^{p}}{|\mathbb{S}^{n-1}|^{\frac{1}{2}}}
$$ and in particular \eqref{Laplacebeltrami} is satisfied for $M_{p}$. If there would be an $h>0$ such that \eqref{ultraeq1} holds with  $M_{p}=p!^{s}$, we would have for the function 
$$
f(t)=\frac{1}{\sqrt{t^{2}+1/2}}
$$
that 
$$
\|f^{(p)}\|_{L^{\infty}(\mathbb{R})}= \sqrt{2} \sup_{t\in\mathbb{R}}  |\partial_{x_{2}}^{p}\varphi^{\upharpoonright} (\sqrt{2}/2, t, 0,\dots, 0)| \leq C' h^{-p} M_{p}, \quad \mbox{for all } p\in\mathbb{N},
$$
for some $C'>0$. 
But then $f$ would be analytically continuable to the whole $\mathbb{C}$ as an entire function, which is impossible because $f$ has branch singularities at $t=\pm i\sqrt{2}/2$.

On the other hand, note that in establishing the implications (i)$\Rightarrow$(ii)$\Rightarrow$(iii) the condition $(M.0)$ plays no role because we have only made use there of $(M.1)$ and $(M.2)'$.
\end{remark}

\section{Boundary values of harmonic functions}
\label{section: boundary values harmonic}
We now generalize the results from \cite{Estrada} to ultradistributions. We shall characterize all those harmonic functions on the open unit ball $\mathbb{B}^{n}$ that admit ultradistributional boundary values on $\mathbb{S}^{n-1}$ in terms of their growth near the boundary. Our characterization applies for sequences satisfying the additional conditions discussed below.

Let us fix some notation and terminology. We write $\mathcal{H}(\mathbb{B}^{n})$ for the space of all harmonic functions on $\mathbb{B}^{n}$. We say that $U\in \mathcal{H}(\mathbb{B}^{n})$ has ultradistribution boundary values in the space ${\mathcal{E}^{\ast}}'(\mathbb{S}^{n-1})$ if there is $f\in {\mathcal{E}^{\ast}}'(\mathbb{S}^{n-1})$ such that
\begin{equation}
\label{bvheq1}
\lim_{r\to1^{-}} U(r\omega)= f(\omega) \quad \mbox{in }  {\mathcal{E}^{\ast}}'(\mathbb{S}^{n-1}).
\end{equation}
Since ${\mathcal{E}^{\ast}}'(\mathbb{S}^{n-1})$ is Montel, the converge of \eqref{bvheq1} in the strong topology is equivalent to weak convergence, i.e., 
\begin{equation}
\label{bvheq2}
\lim_{r\to1^{-}} \langle U(r\omega),\varphi(\omega)\rangle= \lim_{r\to1^{-}} \int_{\mathbb{S}^{n-1}} U(r\omega)\varphi(\omega)d\omega= \langle f, \varphi \rangle, 
\end{equation}
for each $\varphi \in \mathcal{E}^{\ast}(\mathbb{S}^{n-1}).$

We first show that \eqref{bvheq1} holds with $U$ being the Poisson transform of $f$. For this, our assumptions are the same as in the previous section, i.e., $(M.1)$, $(M.2)'$ and $(M.0)$ ($(NA)$ in the Beurling case).
The Poisson kernel of $\mathbb{S}^{n-1}$ is \cite{Axler}
\begin{equation}
\label{bvheq3}
P(x,\xi)=\frac{1}{|\mathbb{S}^{n-1}|}\frac{1-|x|^2}{|x-\xi|^n}=\frac{1}{|\mathbb{S}^{n-1}|}\sum _{j=0}^{\infty} |x|^{j} Z_{j}\left(\frac{x}{|x|},\xi\right), \quad \xi\in\mathbb{S}^{n-1},\ x\in\mathbb{B}^{n}.
\end{equation}
Since $P$ is real analytic with respect to $\xi$, we can define the Poisson transform of $f\in {\mathcal{E}^{\ast}}'(\mathbb{S}^{n-1})$ as
\begin{equation}
\label{bvheq4}
P[f](x)=\langle f(\xi), P(x,\xi)\rangle, \quad x\in\mathbb{B}^{n}.
\end{equation}
Clearly, $P[f]\in \mathcal{H}(\mathbb{B}^{n})$ and, by \eqref{bvheq3},  $P[f](r\omega)=\sum_{j=0}^{\infty}r^{j}f_j(\omega)$.

\begin{proposition}
\label{bvhp1} For each $f\in {\mathcal{E}^{\ast}}'(\mathbb{S}^{n-1})$ and $\varphi \in \mathcal{E}^{\ast}(\mathbb{S}^{n-1})$, we have
\begin{equation}
\label{bvheq5}
\lim_{r\to1^{-}} P[f](r\omega)= f(\omega) \quad \mbox{in }  {\mathcal{E}^{\ast}}'(\mathbb{S}^{n-1})
\end{equation}
and
\begin{equation}
\label{bvheq6}
\lim_{r\to1^{-}} P[\varphi](r\omega)= \varphi(\omega) \quad \mbox{in }  \mathcal{E}^{\ast}(\mathbb{S}^{n-1}).
\end{equation}
\end{proposition}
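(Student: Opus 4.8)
The plan is to use the spherical harmonic expansions established in Section~\ref{section spherical harmonics} together with the estimate \eqref{assEstM}. Recall that $P[f](r\omega)=\sum_{j=0}^{\infty} r^{j} f_{j}(\omega)$ and that, by Theorem~\ref{shbvth2}, the projections $f_{j}$ satisfy $\|f_{j}\|_{L^{\infty}(\mathbb{S}^{n-1})}\leq C_{h} e^{M(hj)}$ for all $h>0$ (for some $h>0$ in the Beurling case). Since ${\mathcal{E}^{\ast}}'(\mathbb{S}^{n-1})$ is Montel, it suffices to prove weak convergence, i.e., $\lim_{r\to 1^{-}}\langle P[f](r\omega),\varphi(\omega)\rangle=\langle f,\varphi\rangle$ for each $\varphi\in\mathcal{E}^{\ast}(\mathbb{S}^{n-1})$. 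By orthogonality of spherical harmonics of different degrees and \eqref{shbveq2}, one has $\int_{\mathbb{S}^{n-1}} P[f](r\omega)\varphi(\omega)\,d\omega=\sum_{j=0}^{\infty} r^{j}\langle f_{j},\varphi_{j}\rangle=\sum_{j=0}^{\infty} r^{j}\langle f,\varphi_{j}\rangle$, while $\langle f,\varphi\rangle=\sum_{j=0}^{\infty}\langle f,\varphi_{j}\rangle$ by Proposition~\ref{shbvp1} and continuity of $f$. Hence the difference is $\sum_{j=0}^{\infty}(1-r^{j})\langle f,\varphi_{j}\rangle$.

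First I would bound $|\langle f,\varphi_{j}\rangle|=|\langle f_{j},\varphi_{j}\rangle|\leq \|f_{j}\|_{L^{2}}\|\varphi_{j}\|_{L^{2}}$. Using Theorem~\ref{shbvth2} for the growth of $\|f_{j}\|_{L^{2}}$ and Theorem~\ref{t1}(iii) for the decay $\|\varphi_{j}\|_{L^{2}}\leq C e^{-M(kj)}$ (valid for every $k>0$ in the Roumieu case; in the Beurling case pick $k$ larger than the $h$ coming from $f$), the product is summable: indeed $\|f_{j}\|_{L^{2}}\|\varphi_{j}\|_{L^{2}}\leq C' e^{M(hj)-M(kj)}$, and choosing $k$ large enough relative to $h$ (using $(M.2)'$ and monotonicity of $M$, e.g. via \eqref{assEstM} to gain powers of $j$) makes $\sum_{j}\|f_{j}\|_{L^{2}}\|\varphi_{j}\|_{L^{2}}<\infty$ with a tail bound of the form $O(e^{-M(k'j)})$ with $k'>0$. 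Then the estimate $|{\sum_{j=0}^{\infty}(1-r^{j})\langle f,\varphi_{j}\rangle}|\leq \sum_{j=0}^{N}(1-r^{j})|\langle f,\varphi_{j}\rangle| + 2\sum_{j>N}|\langle f,\varphi_{j}\rangle|$ is handled by first choosing $N$ so the tail is small (by absolute convergence) and then letting $r\to 1^{-}$ to kill the finite head, since $1-r^{j}\to 0$ for each fixed $j$. This proves \eqref{bvheq5}.

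For \eqref{bvheq6}, the same splitting works but now in the topology of $\mathcal{E}^{\ast}(\mathbb{S}^{n-1})$ rather than its dual. Using the equivalent system of norms \eqref{sSeq2}, I would estimate $\|P[\varphi](r\,\cdot\,)-\varphi\|'_{h}=\sup_{j}e^{M(hj)}(1-r^{j})\|\varphi_{j}\|_{L^{q}(\mathbb{S}^{n-1})}$. By Theorem~\ref{t1}(iii), $\|\varphi_{j}\|_{L^{q}}\leq C e^{-M(Lhj)}$ for $L$ large (every $L$ in the Roumieu case, suitably large in the Beurling case), so $e^{M(hj)}\|\varphi_{j}\|_{L^{q}}\leq C e^{M(hj)-M(Lhj)}$, which by \eqref{assEstM} is bounded by $C'' j^{-\eta}$ for any prescribed $\eta>0$ once $L$ is chosen accordingly; in particular it tends to $0$ as $j\to\infty$. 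Splitting the supremum over $j\leq N$ and $j>N$: the tail is uniformly small in $r$ by the decay just noted, and the head is a finite supremum of terms each $(1-r^{j})$-times a bounded quantity, hence $\to 0$ as $r\to 1^{-}$. Since this holds for every $h>0$, we get convergence in $\mathcal{E}^{\ast}(\mathbb{S}^{n-1})$.

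The main obstacle is bookkeeping the interplay between the Roumieu and Beurling quantifiers: in the Roumieu case the test function $\varphi$ supplies arbitrarily fast decay $e^{-M(kj)}$ which easily beats the fixed growth $e^{M(hj)}$ of $f$, whereas in the Beurling case $f$ supplies decay for some fixed $h$ and one must verify that $\varphi$'s decay (valid for every constant) can be taken large enough; this is a routine but careful application of $(M.2)'$ through \eqref{assEstM}. Beyond that, the argument is the standard Abel-summation/tail-splitting device and presents no real difficulty.
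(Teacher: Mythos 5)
Your argument is correct and, for \eqref{bvheq5}, is essentially the paper's: reduce to weak convergence via the Montel property, expand $\int_{\mathbb{S}^{n-1}}P[f](r\omega)\varphi(\omega)\,d\omega=\sum_j r^j\langle f_j,\varphi_j\rangle$, and pass to the limit. The paper simply cites Abel's limit theorem at this point, whereas you prove the needed Abelian statement by hand after first establishing \emph{absolute} convergence of $\sum_j\langle f_j,\varphi_j\rangle$; that is more than is needed but perfectly valid, and the absolute convergence does hold. For \eqref{bvheq6} you diverge from the paper: the paper disposes of it by the same duality computation (weak convergence in the reflexive Montel space $\mathcal{E}^{\ast}(\mathbb{S}^{n-1})$ means testing against $f$ in the dual, which is literally the display already written), while you give a direct estimate in the equivalent norms \eqref{sSeq2}. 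Your route is more self-contained and makes the rate of convergence visible; the paper's is shorter. One point to repair: your Roumieu/Beurling bookkeeping is swapped. By Theorem \ref{t1}(iii) a Roumieu test function has $\|\varphi_j\|\leq Ce^{-M(kj)}$ for \emph{some} $k$ (not every $k$), and by Theorem \ref{shbvth2} a Roumieu ultradistribution satisfies $\|f_j\|\leq C_he^{M(hj)}$ for \emph{every} $h$; it is the Beurling case in which $\varphi$ supplies decay for every $k$ and $f$ has growth for some fixed $h$. The argument survives because in each case exactly one of the two exponents carries the universal quantifier (in Roumieu one shrinks $h$, in Beurling one enlarges $k$), and likewise in your proof of \eqref{bvheq6} the Roumieu case requires convergence in $\|\cdot\|'_h$ only for \emph{some} sufficiently small $h$ (which suffices in the inductive limit), not for every $h$. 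With those labels corrected the proof is complete.
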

\begin{proof} Due to the Montel property of these spaces (which also implies they are reflexive), it is enough to verify weak convergence of the Poisson transform in both cases in order to prove strong convergence of \eqref{bvheq5} and \eqref{bvheq6}. By Theorem \ref{shbvth2} (or Theorem \ref{t1}), we have that $\langle f,\varphi \rangle=\sum_{j=0}^{\infty} \langle f_{j},\varphi_{j} \rangle$; Abel's limit theorem on power series then yields
$$
\lim_{r\to1^{-}} \int_{\mathbb{S}^{n-1}} P[f](r\omega)\varphi(\omega)d\omega= \lim_{r\to1^{-}} \langle f(\omega),P[\varphi](r\omega)\rangle=  \lim_{r\to1^{-}}\sum_{j=0}^{\infty} r^{j} \langle f_{j},\varphi_{j} \rangle=
 \langle f, \varphi \rangle.
$$
\end{proof}

We now deal with the characterization of harmonic functions $U$ that satisfy (\ref{bvheq1}). This characterization is in terms of the associated function of $M_{p}/p!$, which we denote by $M^{\ast}$ as in \cite{Komatsu}, i.e., the function
$$
M^{\ast}(t)=\sup_{p\in\mathbb{N}} \log \left(\frac{p!t^{p}}{M_p}\right) \quad \mbox{for }t>0
$$
and $M^{\ast}(0)=0.$ We need two extra assumptions on the sequence, namely,
\smallskip

 \begin{itemize}
\item [$(M.1)^{\ast}$] $M_{p}/p!$ satisfies $(M.1)$,
\item [$(M.2)$\ ] $M_{p+q} \leq AH^{p+q}M_qM_q$, $p,q \in \mathbb{N}$, for some $A,H \geq 1$.
\end{itemize}
Naturally, $(M.1)^{\ast}$ implies $(M.0)$ and $(M.1)$ while $(M.2)$ is stronger than $(M.2)'$.

Note that $(M.1)^{\ast}$ delivers essentially two cases. Either $(NA)$ holds or there are constants such that $C_{1}L^{p}_{1}p!\leq M_{p}\leq C_{2}L^{p}_2p!$. In the latter case we may assume that $M_{p}=p!$ as for any such a sequence $\mathcal{E}^{\{M_{p}\}}(\mathbb{S}^{n-1})= \mathcal{A}(\mathbb{S}^{n-1})$. When $(NA)$ holds $M^{\ast}(t)$ is finite for all $t\in[0,\infty)$, whereas $M_{p}=p!$ gives $M^{\ast}(t)=0$ for $0\leq t\leq1$ and $M^{\ast}(t)=\infty$ for $t>1$. In the $(NA)$ case we also have $M^{\ast}(t)=0$ for $t\in[0,M_1]$. The importance of the assumptions $(M.1)^{\ast}$ and $(M.2)$ lies in the ensuing lemma of Petzsche and Vogt:

\begin{lemma}[\cite{Petzsche84}]
\label{bvhl2} Suppose that $M_p$ satisfies $(M.1)^{\ast}$ and $(M.2)$. Then, there are constants $L,\ell>0$ such that
$$
\inf_{y>0} (M^{\ast}(1/y)+ty)\leq M(\ell t)+\log L, \quad \mbox{for all }t>0.
$$

\end{lemma}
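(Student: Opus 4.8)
The plan is to reduce the inequality to a statement purely about the sequences $M_p$ and $p!$ by unwinding the definitions of the associated functions $M$ and $M^{\ast}$. Recall that $M(t)=\sup_p \log(t^p/M_p)$ and $M^{\ast}(t)=\sup_p\log(p!\, t^p/M_p)$; by a standard duality (\cite[Lemma 3.2 and Prop. 3.6]{Komatsu}) one has, conversely, the sequence-recovery formulas $M_p = \sup_{t>0} t^p e^{-M(t)}$ and $M_p/p! = \sup_{t>0} t^p e^{-M^{\ast}(t)}$, valid under $(M.1)$ (hence under $(M.1)^{\ast}$). Writing $N_p := M_p/p!$, the hypothesis $(M.1)^{\ast}$ says precisely that $N_p$ is log-convex, so $M^{\ast}$ is the associated function of the log-convex sequence $N_p$ and the recovery formula $N_p = \sup_{t>0} t^p e^{-M^{\ast}(t)}$ holds.

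First I would observe that, by Legendre-type duality for log-convex sequences, the left-hand side $\inf_{y>0}(M^{\ast}(1/y)+ty)$ is itself the associated function of a related sequence. Indeed, $\inf_{y>0}(M^{\ast}(1/y)+ty)$ is essentially a Legendre transform of $M^{\ast}$ composed with inversion; carrying this out one finds it equals $\sup_{p}\log\big(t^p (p!\, N_p)^{-1}\cdot p^p\big)$-type expression, and using $p^p \asymp p!\, e^p$ (Stirling) this is comparable to $\sup_p \log\big(t^p e^p/(p!\,N_p)\big) = \sup_p \log\big((et)^p / M_p\big) = M(et)$ up to the Stirling error terms $\sqrt{2\pi p}$, which contribute only a bounded additive constant after taking logarithms and a sup. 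So at the level of a first heuristic, the inequality should hold with $\ell = e$ and some explicit $L$. The role of $(M.2)$ is to control these Stirling correction factors uniformly: $(M.2)$ (equivalently, $M(2t)\leq 2M(t)+\log A$, see \cite[Prop. 3.6]{Komatsu}) lets one absorb the polynomial-in-$p$ discrepancies between $p^p$ and $p!$ into a dilation $t\mapsto Ht$ of the argument together with an additive constant $\log L$.

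The key steps, in order, are: (1) rewrite $M^{\ast}(1/y) = \sup_p \log\big(p!/(M_p y^p)\big)$ and for fixed $t$ analyze the function $y\mapsto M^{\ast}(1/y)+ty$ by splitting according to which index $p$ realizes (or nearly realizes) the sup in $M^{\ast}(1/y)$; equivalently, for each $p$ estimate $\inf_{y>0}\big(\log(p!/(M_p y^p)) + ty\big)$, which is an elementary one-variable calculus problem giving minimizer $y = p/t$ and value $\log(p!/(M_p)) + p\log(t/p) + p = \log\big(p!\,e^p\, t^p/(M_p\, p^p)\big)$. (2) Apply Stirling's bound $p! \leq e\sqrt{p}\,(p/e)^p$ to get $p!\, e^p/p^p \leq e\sqrt{p}$, so each such term is $\leq \log(e\sqrt{p}) + \log(t^p/M_p) \leq \log(e\sqrt{p}) + M(t)$. (3) Take the supremum over $p$: the term $\log(e\sqrt p)$ is unbounded, so a naive sup fails — this is exactly where $(M.2)$ enters. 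Instead, before taking the sup, absorb $\sqrt p$ using $(M.2)$: since $(M.2)$ gives $M_p \geq$ (something that grows, controlling $p\mapsto \sqrt{p}$ after a dilation), one writes $\sqrt{p}\, t^p/M_p \leq (\text{const})\cdot (Ht)^p/M_p$ for a suitable $H$ by the growth lemma \cite[Eq. (3.11)]{Komatsu}, turning the bound into $\leq \log(\text{const}) + M(Ht)$ uniformly in $p$. Then $\inf_y(M^{\ast}(1/y)+ty) = \sup_p(\cdots) \leq M(Ht) + \log L$ with $\ell = H\cdot e$ absorbing the earlier factor $e$, or one re-dilates once more via $(M.2)$, i.e., $M(et)\le M(2^{\lceil\log_2 e\rceil} t) \le (\text{const}) M(t) + \log(\text{const})$.

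The main obstacle I anticipate is handling the unbounded Stirling factor $\sqrt{p}$ (more precisely, ensuring the additive $\log(e\sqrt p)$ does not survive the supremum over $p$) in a way that is uniform in $t$; this is precisely the technical point that forces the hypothesis $(M.2)$ rather than merely $(M.2)'$, and getting the bookkeeping of dilations and additive constants right — so that a single pair $(\ell, L)$ works for all $t>0$ — is the delicate part. Since this lemma is quoted verbatim from Petzsche's paper \cite{Petzsche84}, I would most naturally simply cite it; but if a self-contained argument is wanted, the scheme above via Legendre duality between $M$ and $M^{\ast}$ and Stirling's formula, with $(M.2)$ used to tame the correction terms, is the route I would follow.
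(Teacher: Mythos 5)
The paper does not actually prove this lemma: it is quoted directly from Petzsche and Vogt \cite{Petzsche84}, so your fallback of simply citing that reference is exactly what the authors do. The self-contained sketch you offer, however, has a genuine gap at its center. You pass from $\inf_{y>0}\sup_p\bigl(\log(p!/(M_p y^p))+ty\bigr)$ to $\sup_p\inf_{y>0}\bigl(\log(p!/(M_p y^p))+ty\bigr)$ and then evaluate the inner infimum by one-variable calculus. But the elementary inequality between these two quantities goes the wrong way: one always has $\inf_y\sup_p\geq\sup_p\inf_y$, so your computation produces an upper bound for a \emph{lower} bound of the quantity you need to estimate. The entire content of the lemma is precisely the reverse comparison $\inf_y\sup_p\leq\sup_p\inf_y+O(1)$ after a dilation of $t$, i.e., a minimax exchange, and nothing in your sketch addresses it. A Sion-type argument is not automatic either: although $-\log(M_p/p!)$ is concave in $p$ by $(M.1)^{\ast}$ and the map $u\mapsto te^{u}-pu$ is convex in $u=\log y$, the index $p$ ranges over the integers and neither domain is compact, so one must either make the interpolation and truncation precise or, as Petzsche and Vogt do, exhibit an explicit good choice $y=y(t)$ (tied to the quotients $m_p^{\ast}=M_p(p-1)!/(M_{p-1}\,p!)$, which are nondecreasing by $(M.1)^{\ast}$) and estimate $M^{\ast}(1/y(t))+t\,y(t)$ against $M(\ell t)$ directly.

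Relatedly, you locate the role of $(M.2)$ in absorbing the Stirling factor $\sqrt{p}$, but that absorption is elementary: $\sqrt{p}\leq C_H H^{p}$ for any $H>1$ with no condition on $M_p$ whatsoever. The hypotheses $(M.1)^{\ast}$ and $(M.2)$ are in fact consumed by the inf--sup exchange just described (one must control $\log\bigl(q!\,t^q/(M_q\,p^q)\bigr)$ for \emph{all} $q$, not just $q=p$, once $y$ is fixed). As written, your steps (1)--(3) establish only the easy inequality $\sup_p\inf_y(\cdots)\leq M(Ht)+\log L$, which, combined with $\inf\sup\geq\sup\inf$, yields no upper bound on $\inf_{y>0}(M^{\ast}(1/y)+ty)$.
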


We then have,

\begin{theorem} \label{bvht1} Assume $M_p$ satisfies $(M.1)^{\ast}$ and $(M.2)$. Then, a harmonic function $U\in\mathcal{H}(\mathbb{B}^{n})$ admits boundary values in ${\mathcal {E}^{\{M_p\}}}'(\mathbb S^{n-1})$  (in ${\mathcal E^{(M_p)}}'(\mathbb S^{n-1})$)
if and only if for each $h>0$ there is $C=C_h>0$ (there are $h>0$ and $C>0$) such that
\begin{equation}\label{harmonic}|U(x)|\leq C e^{M^{\ast}\left(\frac{h}{1-|x|}\right)} \quad \mbox{for all } x\in\mathbb{B}^{n}.
\end{equation}
In such a case $U=P[f]$, where $f$ is its boundary ultradistribution given by \eqref{bvheq1}.    
\end{theorem}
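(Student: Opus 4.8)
The plan is to prove the two implications separately, using in an essential way the spherical harmonic characterization of ultradistributions (Theorem \ref{shbvth2}) together with the Petzsche--Vogt Lemma \ref{bvhl2}.

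\medskip

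\textbf{Necessity.} Suppose $U$ admits boundary values $f\in{\mathcal{E}^{\ast}}'(\mathbb{S}^{n-1})$ in the sense of \eqref{bvheq1}. First I would argue that $U$ is forced to be $P[f]$: since $U$ is harmonic on $\mathbb{B}^{n}$, for each fixed $0<r<1$ the function $\omega\mapsto U(r\omega)$ lies in $\mathcal{E}(\mathbb{S}^{n-1})$ and hence has an $L^2$ spherical harmonic expansion $U(r\omega)=\sum_{j}U_{j}(r\omega)$; harmonicity plus homogeneity considerations (each projection of a harmonic function onto $\mathcal{H}_j$ scales like $r^{j}$) give $U(r\omega)=\sum_{j} r^{j}g_{j}(\omega)$ for some $g_j\in\mathcal{H}_j(\mathbb{S}^{n-1})$, the series converging for $r<1$. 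Testing against $Z_j(\omega,\cdot)$ and letting $r\to 1^{-}$ using \eqref{bvheq2} identifies $g_j=f_j$, so $U(r\omega)=\sum_j r^j f_j(\omega)=P[f](r\omega)$. Now I estimate $|U(r\omega)|$: by Theorem \ref{shbvth2}, the projections satisfy $\|f_j\|_{L^{\infty}(\mathbb{S}^{n-1})}\leq C\sqrt{n}\,|\mathbb{S}^{n-1}|^{-1/2} j^{n/2-1} e^{M(h_0 j)}$ (Roumieu: for each $h_0$; Beurling: for some $h_0$). Summing $\sum_j r^j \|f_j\|_{L^\infty}$, one absorbs the polynomial factor $j^{n/2-1}$ into the exponential via $(M.2)'$ as in the proof of Theorem \ref{t1}, and is left with a series of the form $\sum_j r^j e^{M(h j)}$. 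The key point is then the elementary bound $r^j = e^{-j\log(1/r)}\leq e^{-j(1-r)/2}$ for $r$ near $1$, so that the sum is dominated (up to a geometric tail) by $\sup_{j} e^{M(hj) - j(1-r)/2} \leq \exp\bigl(\sup_{s>0}(M(hs)-s(1-r)/2)\bigr)$. By the duality between $M$ and $M^{\ast}$ (Young-type inequality for associated functions, $\sup_{s}(M(s)-ys)=M^{\ast}$-type expression, cf.\ Komatsu), this supremum is exactly of the form $M^{\ast}(h'/(1-|x|))$, which yields \eqref{harmonic}.

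\medskip

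\textbf{Sufficiency.} Conversely, assume $U\in\mathcal{H}(\mathbb{B}^{n})$ satisfies the growth bound \eqref{harmonic}. Write again $U(r\omega)=\sum_j r^j g_j(\omega)$ with $g_j\in\mathcal{H}_j(\mathbb{S}^{n-1})$; the candidate boundary value is $f=\sum_j g_j$, and I must show this series converges in ${\mathcal{E}^{\ast}}'(\mathbb{S}^{n-1})$ and that \eqref{bvheq1} holds. By the converse half of Theorem \ref{shbvth2}, it suffices to prove that $\|g_j\|_{L^{\infty}(\mathbb{S}^{n-1})}\leq C e^{M(hj)}$ with the appropriate quantifiers. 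To bound $\|g_j\|$, recover $g_j$ from $U$ by the reproducing formula: $r^j g_j(\omega)=|\mathbb{S}^{n-1}|^{-1}\int_{\mathbb{S}^{n-1}} U(r\xi) Z_j(\omega,\xi)\,d\xi$, whence $\|g_j\|_{L^\infty}\leq r^{-j}\sqrt{d_j}\,|\mathbb{S}^{n-1}|^{-1/2}\max_{\xi}|U(r\xi)|\leq C r^{-j} j^{n/2-1} e^{M^{\ast}(h/(1-r))}$, using $\|Z_j(\omega,\cdot)\|_{L^2}=\sqrt{d_j|\mathbb{S}^{n-1}|}$ and \eqref{dj}. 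Now \emph{optimize over $r\in(0,1)$}: with $1-r=y$, we need $\inf_{0<y<1}\bigl(-j\log(1-y)+M^{\ast}(h/y)\bigr)$, and since $-\log(1-y)\leq 2y$ for $y\leq 1/2$, this is at most $\inf_{y>0}\bigl(2jy + M^{\ast}(h/y)\bigr)$ (the range $y\geq 1/2$ is handled by monotonicity / triviality). \textbf{This is precisely where Lemma \ref{bvhl2} enters}: it gives $\inf_{y>0}(M^{\ast}(1/y)+ty)\leq M(\ell t)+\log L$, and after rescaling $y\mapsto hy$ we obtain $\inf_{y>0}(2jy+M^{\ast}(h/y))\leq M(\ell' j)+\log L'$. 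Absorbing the surviving polynomial factor $j^{n/2-1}$ via $(M.2)'$ (exactly as in Theorem \ref{t1}) gives $\|g_j\|_{L^\infty}\leq C e^{M(h'' j)}$, with $h''$ proportional to $h$ in the Roumieu case (for each $h$) and for some $h''$ in the Beurling case. Hence $f=\sum_j g_j\in{\mathcal{E}^{\ast}}'(\mathbb{S}^{n-1})$, and since then $U(r\omega)=\sum_j r^j g_j(\omega)=P[f](r\omega)$, Proposition \ref{bvhp1} gives $\lim_{r\to1^{-}}U(r\omega)=f$ in ${\mathcal{E}^{\ast}}'(\mathbb{S}^{n-1})$, proving \eqref{bvheq1} and $U=P[f]$.

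\medskip

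\textbf{Main obstacle.} The routine part is the spherical harmonic bookkeeping; the genuine content is the two-sided passage between the growth rate of $U$ near $\mathbb{S}^{n-1}$ (governed by $M^{\ast}$) and the growth rate of the projections $f_j$ (governed by $M$). One direction is the classical Young-type inequality $r^j e^{M^\ast(\cdot)}$ estimate, but the other direction genuinely requires the non-trivial Petzsche--Vogt Lemma \ref{bvhl2}, whose hypotheses $(M.1)^{\ast}$ and $(M.2)$ are exactly the extra assumptions imposed for this theorem; getting the quantifiers on $h$ to match up correctly in both the Roumieu and Beurling cases (and checking that the constant $\ell$ produced by the lemma is independent of $U$) is the delicate bookkeeping to be careful about.
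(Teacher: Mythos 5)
Your proposal is correct and follows essentially the same route as the paper: expand $U(r\omega)=\sum_j r^j f_j(\omega)$, convert $\sup_j r^je^{M(hj)}$ into $e^{M^*(h'/(1-r))}$ by a Legendre-type duality in the forward direction, and in the converse recover $\|f_j\|$ from the reproducing kernel and optimize over $r$ using the Petzsche--Vogt Lemma \ref{bvhl2} (your explicit identification of $U$ with $P[f]$ via the coefficients $g_j=f_j$ is a welcome extra detail). The only two spots needing slightly more care than you give them are the factor $\asymp(1-r)^{-1}$ produced by your ``geometric tail'' $\sum_j e^{-j(1-r)/2}$, which must be absorbed into $e^{M^*(h'/(1-r))}$ using $(M.2)'$ (the paper does this through its $(p+1)!/(1-r)^{p+1}$ computation), and the replacement of $\inf_{0<y\le 1/2}$ by $\inf_{y>0}$ before invoking Lemma \ref{bvhl2}, which is legitimate but rests on the fact that $M^*(t)=0$ for $t\le M_1$, so the infimand $2jy+M^*(h/y)$ is eventually increasing in $y$ and the discrepancy is only an additive constant $M^*(2h)$.
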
 
\begin{proof}
Suppose $U(x)=P[f](x)$ with  $f\in {\mathcal{E}^{\ast}}'(\mathbb{S}^{n-1})$. Then, 
$$
U(r\omega)=\sum_{j=0}^{\infty} r^j f_j(\omega). $$
If 
$\|f_j\|_{L^{\infty}(\mathbb{S}^{n-1})}\leq C e^{M(h j)},$
for a fixed $h>0$, the inequality \eqref{assEstM} gives 
\begin{align*}
|U(r\omega)|&\leq \sum_{j=0}^{\infty}|f_j(\omega)|r^j
=C+\frac{A^{2}}{h^{2}}\sum_{j=1}^{\infty}\frac{1}{j^{2}}|f_j(\omega)| e^{-M(hj)}e^{M(hH^{2}j)}r^j
\\
&
\leq C\left(1+\frac{A^{2}\pi^{2}}{6h^{2}}\right) \sup_{j\in\mathbb N} r^j e^{M(hH^{2}j)}.
\end{align*}
Now, 
$$
\sup_{j\in\mathbb {N}}r^j e^{M(hH^{2}j)}=\sup_{p\in\mathbb N}  \frac{(H^{2}h)^{p}}{M_p} \sup_{j\in\mathbb {N}} r^j j^p
$$
and 
$$
\sup_{j\in\mathbb{N}}r^j j^p\leq\sum_{j=0}^{\infty} r^j j^p\leq\sum_{j=0}^{\infty}\frac{(j+p)!}{j!}r^j=\left(\frac{1}{1-r}\right)^{(p)}=\frac{p!}{(1-r)^{p+1}}<\frac{(p+1)!}{(1-r)^{p+1}}\:.
$$
Therefore, by $(M.2)'$, 
$$|U(r\omega)|\leq C\frac{A}{H^{3}h}\left(1+\frac{A^{2}\pi^{2}}{6h^{2}}\right)  \sup_{p}\frac {(p+1)! (H^{3}h)^{p+1}}{M_{p+1}(1-r)^{p+1}}\leq CC_{h} e^{M^{\ast}\left(\frac{H^{3}h}{1-r}\right)}.$$

Assume now that \eqref{harmonic} holds for each $h>0$ (for some $h>0$). Every harmonic function on $\mathbb{B}^{n}$ can be written as
$$ 
U(r\omega)=\sum_{j=0}^{\infty} r^{j}f_{j}(\omega),
$$
with each $f_{j}$ a spherical harmonic of degree $j$. By Proposition \ref{bvhp1}, it is enough to check that $f=\sum_{j=0}^{\infty}f_{j}\in{\mathcal{E}^{\ast}}'(\mathbb{S}^{n-1})$, because in this case $U=P[f]$ and $f$ would be the boundary ultradistribution of $U$. By Theorem \ref{shbvth2}, it is then suffices to verify that the sequence $f_{j}$ satisfies the bounds \eqref{shbveq4} for each $h>0$ (for some $h>0$). Here we use $q=\infty$. 
Fix $h>0$ and assume that \eqref{harmonic}  holds. One clearly has 
$$
f_{j}(\omega)= \frac{1}{r^{j}|\mathbb S^{n-1}|}\int_{\mathbb S^{n-1}} U(r\xi) Z_j(\omega,\xi)d\xi.
$$
When $j=0$, we obtain $f_{0}=U(0)$ and so $\|f_{0}\|_{L^{\infty}(\mathbb{S}^{n-1})}\leq C e^{M^{\ast}(h)}$. Keep now $j\geq 1$. Since the zonal harmonic satisfies $\|Z_j(\cdot,\xi)\|_{L^{\infty}(\mathbb S^{n-1})}= d_{j}\leq n j^{n-2}$ \cite[p. 80]{Axler}, we obtain, for all $j\geq 1$,
\begin{equation*}
\label{h1}
\|f_j\|_{L^{\infty}(\mathbb S^{n-1})}\leq C n j^{n-2} \inf_{0<r<1}  r^{-j} e^{M^{\ast}(\frac{h}{1-r})}.
\end{equation*} 
Performing the substitution $r=e^{-y}$, and using Lemma \ref{bvhl2} and $M^{\ast}(t)=0$ for $t\leq M_{1}$, 
\[
\|f_j\|_{L^{\infty}(\mathbb S^{n-1})}\leq  C C_{h} n j^{n-2}\exp\left(\inf_{0<y<\infty} M^{\ast}\left(2h/y\right)+jy\right)\leq C C_{h} L n j^{n-2}e^{M(2\ell h j)}.
\]
Finally, using the estimate (\ref{assEstM}), we conclude that there is $C'_{h}$ such that
$$
\|f_j\|_{L^{\infty}(\mathbb S^{n-1})}\leq C C'_{h} e^{M(2\ell h j)}, \quad \mbox{for each } j\in\mathbb{N}.
$$
\end{proof} 
When $M_{p}=p!$, the bound \eqref{harmonic} holds for any arbitrary harmonic function since $M^{\ast}(t)=\infty$ for $t>1$. Hence,
\begin{corollary}\label{bvhc1} Any harmonic function $U\in\mathcal{H}(\mathbb{B}^{n})$ can be written as the Poisson transform  $U=P[f]$ of  an analytic functional $f$ on $\mathbb{S}^{n-1}$.
\end{corollary}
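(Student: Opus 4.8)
The plan is to obtain this corollary as an immediate specialization of Theorem \ref{bvht1} to the weight sequence $M_p=p!$, for which $\mathcal{E}^{\{p!\}}(\mathbb{S}^{n-1})=\mathcal{A}(\mathbb{S}^{n-1})$ and ${\mathcal{E}^{\{p!\}}}'(\mathbb{S}^{n-1})=\mathcal{A}'(\mathbb{S}^{n-1})$. First I would check that $M_p=p!$ meets the hypotheses of Theorem \ref{bvht1}: the sequence $M_p/p!\equiv 1$ is trivially log-convex, so $(M.1)^{\ast}$ holds, and $(p+q)!={p+q\choose p}\,p!\,q!\leq 2^{p+q}\,p!\,q!$ gives $(M.2)$ with $A=H=2$. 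In particular Theorem \ref{bvht1} is applicable in its Roumieu form.

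Next I would identify the relevant associated function. For $M_p=p!$ one has $M^{\ast}(t)=\sup_{p\in\mathbb{N}}\log\bigl(p!\,t^p/p!\bigr)=\sup_{p\in\mathbb{N}} p\log t$, which equals $0$ for $0\leq t\leq 1$ and $+\infty$ for $t>1$. The key observation is then that the growth bound \eqref{harmonic} becomes \emph{vacuous} for the Roumieu class $\{p!\}$: given any $h>0$, on the set $\{x\in\mathbb{B}^n:\ |x|>1-h\}$ one has $h/(1-|x|)>1$, so the right-hand side of \eqref{harmonic} is infinite; on the complementary set $\{x\in\mathbb{B}^n:\ |x|\leq 1-h\}$, which is a compact subset of $\mathbb{B}^n$, the harmonic (hence continuous) function $U$ is bounded, so \eqref{harmonic} holds with $C_h=\sup_{|x|\leq 1-h}|U(x)|<\infty$. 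Thus every $U\in\mathcal{H}(\mathbb{B}^n)$ satisfies \eqref{harmonic} for every $h>0$.

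Applying the Roumieu case of Theorem \ref{bvht1} then produces an analytic functional $f\in\mathcal{A}'(\mathbb{S}^{n-1})$ with $U=P[f]$, which is the assertion. There is no genuine obstacle here: the content of the corollary is precisely the phenomenon that in the analytic-functional setting the boundary-growth restriction of Theorem \ref{bvht1} disappears, so the characterization degenerates to ``all harmonic functions.'' The only point requiring (minor) care is the bookkeeping showing that the constant $C_h$ in \eqref{harmonic} can be chosen finite for \emph{each} $h>0$, which is exactly what the compactness argument in the second paragraph provides.
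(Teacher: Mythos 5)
Your proposal is correct and follows exactly the paper's route: the paper proves the corollary by observing that for $M_p=p!$ one has $M^{\ast}(t)=\infty$ for $t>1$, so the growth bound \eqref{harmonic} is satisfied by every harmonic function, and then invokes Theorem \ref{bvht1}. You have merely made explicit the details the paper leaves implicit (verification of $(M.1)^{\ast}$ and $(M.2)$ for $p!$, the computation of $M^{\ast}$, and the compactness argument on $\{|x|\leq 1-h\}$), all of which are accurate.
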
 

Suppose that $M_{p}$ satisfies $(NA)$. Consider the family of Banach spaces
$$
{\mathcal H}^{M_p,h}(\mathbb B^n)=\{U\in{\mathcal H}(\mathbb B^n):\|U\|_{{\mathcal H}^{M_p,h}(\mathbb B^n)}=\sup_{x\in\mathbb{B}^{n}} |U(x)|e^{-M^{\ast}(\frac{h}{1-|x|})}<\infty\}.
$$
We define the Fr\'{e}chet  and $(LB)$ spaces of harmonic functions
$$
\mathcal{H}^{\{M_p\}}(\mathbb{B}^{n}) = \varprojlim_{h \rightarrow 0^{+}} \mathcal{H}^{\{M_p\},h}(\mathbb{B}^{n}) \quad \mbox{ and }\quad \mathcal{H}^{(M_p)}(\mathbb{B}^{n}) = \varinjlim_{h \rightarrow \infty} \mathcal{H}^{\{M_p\},h}(\mathbb{B}^{n}).
$$

This definition still makes sense for $\{p!\}$ because for $h<1$ we have
$$
\sup_{x\in\mathbb{B}^{n}} |U(x)|e^{-M^{\ast}\left(\frac{h}{1-|x|}\right)}=\sup_{|x|\leq 1-h}|U(x)|.$$ 
In this case we obtain the space of all harmonic functions $\mathcal{H}(\mathbb{B}^{n})=\mathcal{H}^{\{p!\}}(\mathbb{B}^{n})$ with the canonical topology of uniform convergence on compact subsets of $\mathbb{B}^{n}$. By Theorem \ref{bvht1}, the mapping $\operatorname*{bv}(U)=f$, with $f$ given by \eqref{bvheq1}, provides a linear isomorphism from $\mathcal{H}^{\ast}(\mathbb{B}^{n})$ onto  ${\mathcal{E}^{\ast}}'(\mathbb{S}^{n-1})$ if $M_{p}$ satisfies $(M.1)^{\ast}$ and $(M.2)$. Our proof given above actually yields a topological result:

\begin{theorem}
\label{bvhth2} Suppose $M_{p}$ satisfies $(M.1)^{\ast}$ and $(M.2)$. The boundary value mapping $$
\operatorname*{bv}: \mathcal{H}^{\ast}(\mathbb{B}^{n})\to {\mathcal{E}^{\ast}}'(\mathbb{S}^{n-1})$$
is a topological vector space isomorphism with the Poisson transform $$P:{\mathcal{E}^{\ast}}'(\mathbb{S}^{n-1})\to \mathcal{H}^{\ast}(\mathbb{B}^{n})$$ as inverse.

\end{theorem}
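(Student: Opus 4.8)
The plan is to leverage Theorem~\ref{bvht1} together with the characterizations of $\mathcal{E}^{\ast}(\mathbb{S}^{n-1})$ and ${\mathcal{E}^{\ast}}'(\mathbb{S}^{n-1})$ via spherical harmonic expansions (Theorems~\ref{t1} and~\ref{shbvth2}). Theorem~\ref{bvht1} already tells us, as sets, that $\operatorname*{bv}$ is a bijection from $\mathcal{H}^{\ast}(\mathbb{B}^{n})$ onto ${\mathcal{E}^{\ast}}'(\mathbb{S}^{n-1})$ with inverse $P$; what remains is to verify bicontinuity. Since both sides are, respectively, $(FS)$/$(DFS)$ spaces (hence Montel, in particular barrelled and webbed), it would suffice in principle to invoke a closed graph / open mapping theorem once we know the map is linear and, say, sequentially continuous; but I prefer to give the estimates directly, because they fall out of the proof of Theorem~\ref{bvht1} with essentially no extra work and also pin down the explicit shift constants $\ell, L$.

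Concretely, I would proceed in two directions. \emph{Continuity of $\operatorname*{bv}$:} given $U\in\mathcal{H}^{M_p,h}(\mathbb{B}^{n})$, the second half of the proof of Theorem~\ref{bvht1} shows that the spherical harmonic projections $f_j$ of $f=\operatorname*{bv}(U)$ satisfy $\|f_j\|_{L^\infty(\mathbb{S}^{n-1})}\leq C\,C'_h\,e^{M(2\ell h j)}\|U\|_{\mathcal{H}^{M_p,h}(\mathbb{B}^{n})}$ for all $j$, where the constant $C'_h$ and $\ell$ come from Lemma~\ref{bvhl2} and \eqref{assEstM}. By the remark following Theorem~\ref{shbvth2}, the norms $\sup_j e^{-M(hj)}\|f_j\|_{L^q(\mathbb{S}^{n-1})}$ induce the topology of ${\mathcal{E}^{\ast}}'(\mathbb{S}^{n-1})$ (projective for Roumieu, inductive for Beurling), so this is exactly the continuity estimate needed in both cases — in the Roumieu case one checks that for each target parameter $h'$ one can choose the source parameter $h$ appropriately, and dually in the Beurling case. \emph{Continuity of $P$:} given $f\in{\mathcal{E}^{\ast}}'(\mathbb{S}^{n-1})$ with $\|f_j\|_{L^\infty(\mathbb{S}^{n-1})}\leq C\,e^{M(hj)}$, the first half of the proof of Theorem~\ref{bvht1} gives $|U(x)|=|P[f](x)|\leq C\,C_h\,e^{M^{\ast}(H^3 h/(1-|x|))}$, where $C_h$ depends only on $h$, $A$, $H$; i.e. $\|P[f]\|_{\mathcal{H}^{M_p,H^3 h}(\mathbb{B}^{n})}\leq C_h\sup_j e^{-M(hj)}\|f_j\|_{L^\infty(\mathbb{S}^{n-1})}$. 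Again this is precisely a continuity estimate between the corresponding Banach pieces, with the right direction of parameter shift in each of the Roumieu and Beurling settings.

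Putting these two estimates together with the already-established bijectivity (Theorem~\ref{bvht1}) yields that $\operatorname*{bv}$ and $P$ are mutually inverse topological isomorphisms. I would also remark that, alternatively, continuity in one direction plus the Montel/webbed structure of the spaces and De~Wilde's closed graph theorem would give continuity of the inverse for free; but the direct route above is cleaner and self-contained given what is already proven. The main obstacle — which is really the only nontrivial point — is bookkeeping the parameter shifts: one must make sure that the constant $H^3$ appearing when passing $f\mapsto P[f]$ and the constant $2\ell$ appearing when passing $U\mapsto\operatorname*{bv}(U)$ are absorbed correctly into the defining inductive/projective limits, so that for every neighbourhood on the target there is a neighbourhood on the source mapped into it. Since $M$ and $M^{\ast}$ only get composed with linear rescalings of their arguments, $(M.2)'$ (equivalently \eqref{assEstM}) handles all such rescalings uniformly, and no genuinely new estimate beyond those in the proof of Theorem~\ref{bvht1} is required.
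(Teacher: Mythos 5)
Your proposal is correct and matches the paper's approach: the paper gives no separate argument for Theorem~\ref{bvhth2} beyond observing that the two halves of the proof of Theorem~\ref{bvht1} already furnish the continuity estimates between the Banach pieces (with the parameter shifts $h\mapsto H^{3}h$ and $h\mapsto 2\ell h$), combined with the fact that the norms \eqref{shbveq4} induce the topology of ${\mathcal{E}^{\ast}}'(\mathbb{S}^{n-1})$. Your bookkeeping of the rescalings via \eqref{assEstM} and your remark on the closed-graph alternative are both consistent with what the paper intends.
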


\begin{remark}
\label{bvhrk2} Suppose $M_p$ satisfies $(M.0)$ ($(NA)$ in the Beurling case). Theorem \ref{bvhth2} is valid if one replaces $(M.1)^{\ast}$ by the condition
\smallskip

\begin{itemize}
\item [$(M.4)$] $M_p\leq L^{p+1}p! M^{\ast}_{p},$ $p\in\mathbb{N},$ for some  $L\geq 1$.
\end{itemize}
Here $M_{p}^{\ast}$ is the convex regularization of $M_{p}/p!$, namely, the sequence
$$
M_{p}^{\ast}= \sup_{t>0}\frac{t^{p}}{e^{M^{\ast}(t)}}.
$$
In fact,  $p!M_{p}^{\ast}$ satisfies $(M.1)^{\ast}$ and, under $(M.4)$, gives rise to the same ultradistribution spaces as $M_p$. We mention that strong non-quasianalyticity (i.e., Komatsu's condition $(M.3)$ \cite{Komatsu}) automatically yields $(M.4)$, as was shown by Petzsche \cite[Prop. 1.1]{Petzsche88}. Furthermore, Petzsche and Vogt \cite[Sect. 5]{Petzsche84} proved under the assumption $(M.2)$ that $(M.4)$ is equivalent to the so-called Rudin condition:
\begin{itemize}
\item [$(M.4)''$] $\displaystyle \max_{q\leq p}\left(\frac{M_{q}}{q!}\right)^{\frac{1}{q}}\leq A\left(\frac{M_{p}}{p!}\right)^{\frac{1}{p}},$ $p\in\mathbb{N}$, for some $A>0$,
\end{itemize}
which is itself equivalent to the property that $\mathcal{E}^{\ast}(\mathbb{S}^{n-1})$ is inverse closed (cf. \cite{Rainer-S,Rudin62}).
\end{remark}

\section{ The support of ultradistributions on the sphere} 
\label{section: support sphere}
This section is devoted to characterizing the support of non-quasianalytic ultradistributions in terms of (uniform) Abel-Poisson summability of their spherical harmonic expansions. Our assumptions on the weight sequence are $(M.1)$, $(M.2)'$, and non-quasianalyticity \cite{Komatsu}, that is,
 \begin{itemize}
\item [$(M.3)'$]  $\displaystyle \sum_{p=1}^{\infty}\frac{M_{p-1}}{M_p}<\infty$. 
\end{itemize}
Note that $(NA)$ is automatically fulfilled because of $(M.3)'$ \cite[Lemma 4.1, p. 56]{Komatsu}.

To emphasize we are assuming $(M.3)'$, we write ${\mathcal{D}^{\ast}}'(\mathbb{S}^{n-1})={\mathcal{E}^{\ast}}'(\mathbb{S}^{n-1})$. By the Denjoy-Carleman theorem \cite{Komatsu}, the support of an ultradistribution $f\in{\mathcal{D}^{\ast}}'(\mathbb{S}^{n-1})$ can be defined in the usual way. Since the natural inclusion $\mathcal{D}'(\mathbb{S}^{n-1})\subset {\mathcal{D}^{\ast}}'(\mathbb{S}^{n-1})$ is support preserving, Theorem \ref{bvhth4} below contains Gonz\'{a}lez Vieli's characterization of the support of Schwartz distributions on the sphere \cite{GonzalezV2016}. The key to the proof of our generalization is the ensuing lemma about the Poisson kernel. Given a non-empty closed set $K\subset \mathbb{S}^{n-1}$ and a weight sequence $N_p$, we consider the family of seminorms
$$
\|\varphi\|_{\mathcal{E}^{\{N_p\},h}(K)}=\sup_{\alpha\in\mathbb{N}}\frac{h^{|\alpha|}\|\partial^{\alpha}_{\mathbb{S}^{n-1}}\varphi\|_{L^{\infty}(K)}}{N_{|\alpha|}}.
$$
\begin{lemma}
\label{bvhl3} Let $K_{1}$ and $K_{2}$ be two disjoint non-empty closed subsets of $\mathbb{S}^{n-1}$. Write $P_{r\omega}(\xi)=P(r\omega,\xi)$, regarded as a function in the variable $\xi\in\mathbb{S}^{n-1}$. Then, there are two positive constants $\ell$ and $C$, only depending on $K_1$ and $K_2$, such that 
$$
\|P_{r\omega}\|_{\mathcal{E}^{\{p!\},\ell}(K_{2})}\leq C(1-r), \quad \mbox{for all } \omega\in K_{1} \mbox{ and } \frac{1}{2}\leq r<1.
$$
\begin{proof} For the sake of convenience, we introduce the spherical type distance 
$$d(\omega,\xi)=1-\omega \cdot \xi .$$
Let $V\subset \mathbb{S}^{n-1}$ be open such that $K_1\cap \overline{V}=\emptyset$ and $K_{2}\subset V$.  Set $\rho=d(K_1, V)$. Note that if $\omega\in K_{1}$ and $\xi\in V$, the term in the denominator of the Poisson kernel,
$$
P(r\omega,\xi)=\frac{1}{|\mathbb{S}^{n-1}|}\frac{1-r^{2}}{(1-2r \omega\cdot \xi+r^2)^{\frac{n}{2}}}\:,
$$
can be estimated by using the lower bound
$$
1-2r \omega\cdot \xi+r^2= (1-r)^{2}+2r(1-\omega\cdot \xi)>2r \rho.
$$ 

We estimate the derivatives of the Poisson kernel in spherical coordinates $\mathfrak{p}(\theta)$ where the north pole is chosen to be located at an arbitrary point of the sphere. Keep $\omega\in K_{1}$ and $1/2\leq r<1$ arbitrary. Let $V'\subset \mathbb{R}^{n-1}$ be such that $V=\mathfrak{p}(V')$. Call $m=|\alpha|$. Using the estimate (\ref{sphere}) and the obvious inequality $m^{m}\leq e^{m-1} m!$ , we obtain
\begin{align*}
\|\partial_{\theta}^{\alpha}(P_{r\omega}\circ \mathfrak{p})\|_{L^{\infty}(V')}
&
\leq \frac{1-r^2}{|\mathbb S^{n-1}|}\sum_{k=1}^m {m \choose k}k^{m-k}n^k\frac{\Gamma\left(\frac{n}{2}+k\right)}{\Gamma\left(\frac{n}{2}\right)} \sup_{\xi \in V}\frac{(2r)^k|\omega|^{k}}{(1-2r \omega\cdot \xi)+r^2)^{k+\frac{n}{2}}}
\\
&
<\frac{1-r^2}{(2r\rho)^{\frac{n}{2}}|\mathbb S^{n-1}|}m^{m}  \sum_{k=1}^m{m\choose k}  \left(1+\frac{\frac{n}{2}-1}{k}\right)^{k}\left(\frac{n}{\rho}\right)^{k}
\\
&
< \frac{3e^{\frac{n}{2}-2}}{2\rho^{\frac{n}{2}}|\mathbb S^{n-1}|}(1-r)m!\left(e\left(\frac{n}{\rho}+1\right)\right)^{m}= C_{1}(1-r)\ell_{1}^{-|\alpha|}|\alpha|!.
\end{align*}
Varying the north poles, we can cover $K_{2}$ by a finite number of open subsets of $V$, each of which parametrized by a system of invertible spherical coordinates. Inverting the polar coordinates on each of the open sets of this covering with the aid of \cite[Prop. 8.1.4]{Hormander}, we deduce that there are $\ell,C>0$, depending only on $V$, such that  
$$
\frac{\ell^{\alpha}\|\partial_{\mathbb{S}^{n-1}}^{\alpha}P_{r\omega}\|_{L^{\infty}(K_{2})}}{|\alpha|!}\leq C(1-r), \quad \mbox{for all } \alpha\in\mathbb{N}^{n}.$$
 This completes the proof of the lemma.
\end{proof}
\end{lemma}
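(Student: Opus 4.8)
The plan is to bound the $x$-derivatives of the Poisson kernel in a fixed neighborhood of $K_2$, push these bounds through the Fa\`a di Bruno inequality \eqref{sphere} to get estimates for the derivatives in spherical coordinates, and finally convert back to the operators $\partial^{\alpha}_{\mathbb{S}^{n-1}}$ by inverting the coordinate charts via \cite[Prop. 8.1.4]{Hormander}. The guiding point is that on the relevant region the denominator of $P$ stays bounded below by a constant of size $1$ while the numerator is comparable to $1-r$, which is exactly where the factor $(1-r)$ in the conclusion comes from.

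First I would fix, depending only on $K_1$ and $K_2$, an open set $V\subset\mathbb{S}^{n-1}$ with $K_2\subset V$ and $\overline{V}\cap K_1=\emptyset$ (say a small spherical neighbourhood of $K_2$), and set $\rho=\inf\{1-\omega\cdot\xi:\omega\in K_1,\ \xi\in\overline{V}\}>0$. For $\omega\in K_1$, $\xi\in V$ and $1/2\le r<1$ one has $1-2r\,\omega\cdot\xi+r^2=(1-r)^2+2r(1-\omega\cdot\xi)\ge 2r\rho\ge\rho$. Hence $\tilde P_{r\omega}(x)=|\mathbb{S}^{n-1}|^{-1}(1-r^2)(1-2r\,\omega\cdot x+r^2)^{-n/2}$ is real analytic on a fixed neighbourhood of $\overline{V}$, coincides there with $P_{r\omega}$ on $\mathbb{S}^{n-1}$, and — since $\mathfrak p$ takes values in $\mathbb{S}^{n-1}$ — may replace the homogeneous extension when computing $\partial^{\alpha}_{\theta}(P_{r\omega}\circ\mathfrak p)$. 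The virtue of this particular extension is that the inner function $x\mapsto 1-2r\,\omega\cdot x+r^2$ is \emph{affine}, so a one-step chain rule gives, for $|\lambda|=k\ge1$ and $\xi\in V$,
$$
|\partial^{\lambda}_{x}\tilde P_{r\omega}(\xi)|\le\frac{1-r^2}{|\mathbb{S}^{n-1}|}\,\frac{\Gamma(n/2+k)}{\Gamma(n/2)}\,(2r)^{k}\,(2r\rho)^{-n/2-k},
$$
using $|\omega^{\lambda}|\le|\omega|^{k}=1$.

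Next I would feed this into the (pointwise) estimate \eqref{sphere} with $\mathfrak p(\Omega)=V$, $f=\tilde P_{r\omega}$ and $m=|\alpha|$, producing a sum over $1\le k\le m$ of $\binom{m}{k}k^{m-k}n^{k}$ times the quantity above. The remainder is elementary bookkeeping: $1-r^{2}\le 2(1-r)$; $(2r)^{k}(2r\rho)^{-k}=\rho^{-k}$ and $(2r\rho)^{-n/2}\le\rho^{-n/2}$ since $2r\ge1$; $\Gamma(n/2+k)/\Gamma(n/2)=\prod_{i=0}^{k-1}(n/2+i)\le(n/2+k-1)^{k}=k^{k}(1+\tfrac{n/2-1}{k})^{k}$; hence $k^{m-k}k^{k}=k^{m}\le m^{m}\le e^{m-1}m!$; and $(1+\tfrac{n/2-1}{k})^{k}\le(n/2)^{k}$ together with $\sum_{k=0}^{m}\binom{m}{k}(n^{2}/(2\rho))^{k}=(1+n^{2}/(2\rho))^{m}$. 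These collapse to
$$
\|\partial^{\alpha}_{\theta}(P_{r\omega}\circ\mathfrak p)\|_{L^{\infty}(\Omega)}\le C_{1}(1-r)\,\ell_{1}^{-|\alpha|}\,|\alpha|!,
$$
with $C_{1},\ell_{1}>0$ depending only on $n$ and $\rho$ (hence only on $K_1,K_2$), uniformly in $\omega\in K_1$ and $1/2\le r<1$; the case $\alpha=0$ is just $P_{r\omega}\le|\mathbb{S}^{n-1}|^{-1}(1-r^{2})\rho^{-n/2}\le C_{1}(1-r)$.

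Finally I would globalize: varying the location of the north pole of $\mathfrak p$, cover the compact set $K_2$ by finitely many open subsets of $V$, each carrying a system of invertible analytic spherical coordinates; the previous estimate holds on each chart. Inverting these coordinates by \cite[Prop. 8.1.4]{Hormander} then yields constants $\ell,C>0$, depending only on this finite covering (hence only on $K_1,K_2$), such that $\ell^{|\alpha|}\|\partial^{\alpha}_{\mathbb{S}^{n-1}}P_{r\omega}\|_{L^{\infty}(K_2)}\le C(1-r)\,|\alpha|!$ for all $\alpha\in\mathbb{N}^{n}$, i.e.\ $\|P_{r\omega}\|_{\mathcal{E}^{\{p!\},\ell}(K_2)}\le C(1-r)$, as claimed. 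There is no deep obstruction here; the only point requiring care is that every constant stay independent of $\omega$ and $r$, which is automatic because both \eqref{sphere} and the inversion step depend only on the fixed geometry — the neighbourhood $V$ and the fixed finite chart covering — never on the function being estimated, and that geometry was chosen in terms of $K_1$ and $K_2$ alone. The most laborious (but entirely routine) part is the combinatorial simplification of the Fa\`a di Bruno sum into the clean $C_{1}(1-r)\ell_{1}^{-|\alpha|}|\alpha|!$ bound.
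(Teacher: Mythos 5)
Your proposal is correct and follows essentially the same route as the paper: the same lower bound $(1-r)^2+2r(1-\omega\cdot\xi)>2r\rho$ on the denominator, the same application of the Fa\`a di Bruno estimate \eqref{sphere} to the (affine-in-$x$) extension of $P_{r\omega}$ whose $\lambda$-th $x$-derivatives produce the $\Gamma(n/2+k)/\Gamma(n/2)$ factors, the same use of $m^{m}\le e^{m-1}m!$, and the same globalization by finitely many spherical charts inverted via H\"ormander's Prop.\ 8.1.4. Your only departures are cosmetic --- making the extension $\tilde P_{r\omega}$ explicit and bounding $(1+\tfrac{n/2-1}{k})^{k}$ by $(n/2)^{k}$ instead of $e^{n/2-1}$ --- which change the constants but not the argument.
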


We are ready to state and prove our last result:

\begin{theorem}
\label{bvhth4} 
Let $f=\sum_{j=0}^{\infty}f_{j}\in{\mathcal{D}^{\ast}}'(\mathbb{S}^{n-1})$ and let $\Omega$ be an open subset of $\mathbb{S}^{n-1}$. If
\begin{equation}
\label{support}
 \lim_{r\to 1^{-}}\sum_{j=1}^{\infty}r^j f_j(\omega)=\lim_{r\to1}P[f](r\omega)=0
 \end{equation}
holds uniformly for $\omega$ on compact subsets of $\Omega$, then $\Omega\subseteq \mathbb S^{n-1}\setminus\operatorname*{supp} f$. 

Conversely,  \eqref{support} holds uniformly on any compact subset of 
$\mathbb{S}^{n-1}\setminus\operatorname*{supp} f$.  
\end{theorem}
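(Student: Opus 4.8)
The plan is to prove the two implications separately. The direct statement is a quick consequence of the weak convergence $P[f](r\,\cdot\,)\to f$ already established in Proposition \ref{bvhp1}, whereas the converse is the substantial part and rests on the Poisson kernel estimate of Lemma \ref{bvhl3} together with the existence of $\mathcal{E}^{\ast}$ cut-off functions, which is guaranteed by non-quasianalyticity $(M.3)'$.

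For the direct statement, suppose \eqref{support} holds uniformly on compact subsets of the open set $\Omega$, and let $\varphi\in\mathcal{E}^{\ast}(\mathbb{S}^{n-1})=\mathcal{D}^{\ast}(\mathbb{S}^{n-1})$ have $\operatorname*{supp}\varphi\subseteq\Omega$; since $\mathbb{S}^{n-1}$ is compact, $K:=\operatorname*{supp}\varphi$ is a compact subset of $\Omega$. Because $P[f](r\,\cdot\,)$ is a smooth function converging to $f$ in $\mathcal{D}^{\ast}{}'(\mathbb{S}^{n-1})$ by Proposition \ref{bvhp1}, I would write
$$\langle f,\varphi\rangle=\lim_{r\to1^{-}}\int_{\mathbb{S}^{n-1}}P[f](r\omega)\varphi(\omega)\,d\omega=\lim_{r\to1^{-}}\int_{K}P[f](r\omega)\varphi(\omega)\,d\omega,$$
and bound the last integral by $\|\varphi\|_{L^{1}(\mathbb{S}^{n-1})}\sup_{\omega\in K}|P[f](r\omega)|$, which tends to $0$ by hypothesis. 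Hence $\langle f,\varphi\rangle=0$ for every such $\varphi$, i.e. $f$ vanishes on $\Omega$, which is exactly the inclusion $\Omega\subseteq\mathbb{S}^{n-1}\setminus\operatorname*{supp} f$.

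For the converse, fix a compact set $K\subset\mathbb{S}^{n-1}\setminus\operatorname*{supp} f$; we may assume $K$ and $\operatorname*{supp} f$ are both non-empty, the remaining cases being trivial. Using non-quasianalyticity of the class, choose $\chi\in\mathcal{E}^{\ast}(\mathbb{S}^{n-1})$ equal to $1$ on a neighbourhood of $\operatorname*{supp} f$ and with $K_{2}:=\operatorname*{supp}\chi$ disjoint from $K$; since $1-\chi$ vanishes near $\operatorname*{supp} f$, one gets $P[f](r\omega)=\langle f(\xi),P_{r\omega}(\xi)\rangle=\langle f(\xi),\chi(\xi)P_{r\omega}(\xi)\rangle$. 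The heart of the argument is then a bound for the $\mathcal{E}^{\ast}$-norm of $\chi P_{r\omega}$ on all of $\mathbb{S}^{n-1}$: as $\chi P_{r\omega}$ is supported in $K_{2}$, the Leibniz rule gives
$$\|\partial^{\alpha}_{\mathbb{S}^{n-1}}(\chi P_{r\omega})\|_{L^{\infty}(\mathbb{S}^{n-1})}\leq\sum_{\beta\leq\alpha}\binom{\alpha}{\beta}\|\partial^{\beta}_{\mathbb{S}^{n-1}}\chi\|_{L^{\infty}(\mathbb{S}^{n-1})}\,\|\partial^{\alpha-\beta}_{\mathbb{S}^{n-1}}P_{r\omega}\|_{L^{\infty}(K_{2})},$$
and into this I would feed: the ultradifferentiable estimates for $\chi$; Lemma \ref{bvhl3} applied to the pair $K,K_{2}$, which gives $\|\partial^{\alpha-\beta}_{\mathbb{S}^{n-1}}P_{r\omega}\|_{L^{\infty}(K_{2})}\leq C(1-r)\ell^{-|\alpha-\beta|}|\alpha-\beta|!$ uniformly for $\omega\in K$ and $\tfrac12\leq r<1$; the inequality $|\alpha-\beta|!\leq A_{L}L^{|\alpha-\beta|}M_{|\alpha-\beta|}$ from $(NA)$ to absorb the factorial into $M_{|\alpha-\beta|}$; and the logarithmic convexity consequence $M_{|\beta|}M_{|\alpha-\beta|}\leq M_{|\alpha|}$ of $(M.1)$. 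After summing the resulting binomial series this yields constants $C,h'>0$, depending only on $K$, $\chi$ and the sequence, with $\|\chi P_{r\omega}\|_{h'}\leq C(1-r)$ for all $\omega\in K$ and $\tfrac12\leq r<1$; in the Beurling case $\chi$ may be chosen with arbitrarily good estimates, so $h'$ can be taken as large as needed. Finally, the continuity of $f\in\mathcal{D}^{\ast}{}'(\mathbb{S}^{n-1})$ provides a bound $|\langle f,\psi\rangle|\leq C_{h}\|\psi\|_{h}$ (for every $h$ in the Roumieu case, for some fixed $h$ in the Beurling case); applying it with $h=h'$ and $\psi=\chi P_{r\omega}$ gives $\sup_{\omega\in K}|P[f](r\omega)|\leq CC_{h'}(1-r)\to0$, which is the asserted uniform convergence.

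The main obstacle is precisely this norm estimate in the converse: one must convert the purely local information of Lemma \ref{bvhl3} — control of the analytic-type seminorms of $P_{r\omega}$ only on the far-away compact set $K_{2}$ — into a genuinely global $\mathcal{E}^{\ast}$-bound on $\mathbb{S}^{n-1}$ that is simultaneously uniform in $\omega\in K$ and of size $O(1-r)$, and this is where the Leibniz rule, the fixed regularity of the cut-off $\chi$, and the interplay of $(NA)$ with $(M.1)$ must be balanced, with due care for the order of the Roumieu quantifiers. Everything else — the construction of $\chi$, the appeal to Proposition \ref{bvhp1}, the identity $P[f](r\omega)=\langle f,\chi P_{r\omega}\rangle$, and the trivial cases $K=\emptyset$ or $\operatorname*{supp} f=\emptyset$ — is routine.
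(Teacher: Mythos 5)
Your proposal is correct and follows essentially the same route as the paper: the direct implication via Proposition \ref{bvhp1}, and the converse via Lemma \ref{bvhl3} combined with the continuity of $f$ tested against $P_{r\omega}$ on a compact neighbourhood $K_{2}$ of $\operatorname*{supp} f$ disjoint from $K$. The only cosmetic difference is that you make the cut-off $\chi$ and the Leibniz-rule computation explicit, whereas the paper compresses this into the standard localized continuity estimate $|\langle f,\varphi\rangle|\leq C_{1}\|\varphi\|_{\mathcal{E}^{\{M_{p}\},h}(K_{2})}$ (and reduces the Roumieu case to the Beurling one by the support-preserving inclusion of duals, where you instead track the quantifiers directly); both ways the non-quasianalyticity $(NA)$ is what converts the analytic-type bounds of Lemma \ref{bvhl3} into $M_{p}$-bounds.
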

\begin{proof} The first part follows immediately from Proposition \ref{bvhp1}. Indeed, let $\varphi\in{\mathcal E}^{\ast}(\mathbb S^{n-1})$ be an arbitrary test function such that $\operatorname*{supp}\varphi\subset \Omega$. Then,
$$
\langle f,\varphi \rangle= \lim_{r\to 1^-}\int_{\mathbb S^{n-1}}P[f](r\omega)\varphi(\omega)d\omega
=\lim_{r\to 1^-}\int_{\operatorname*{supp}\varphi}P[f](r\omega)\varphi(\omega)d\omega=0,
$$
which gives that $f$ vanishes on $\Omega$.

Conversely, since we have the dense and continuous embeddings 
$\mathcal{E}^{(M_{p})}(\mathbb{S}^{n-1})\hookrightarrow \mathcal{E}^{\{M_{p}\}}(\mathbb{S}^{n-1})$ (by Proposition \ref{shbvp1} the linear span of the spherical harmonics is dense in both spaces), we have the natural inclusion ${\mathcal{E}^{\{M_{p}\}}}'(\mathbb{S}^{n-1})\rightarrow {\mathcal{E}^{(M_{p})}}'(\mathbb{S}^{n-1})$ which is obviously support preserving. Thus, we may just deal with the case $f\in{\mathcal{E}^{(M_{p})}}'(\mathbb{S}^{n-1})$.  Let $K_1$ be closed such that $K_1\cap \operatorname*{supp} f=\emptyset$. Select a closed subset of the sphere $K_{2}$ such that $K_{1}\cap K_{2}=\emptyset$ and $\operatorname*{supp}f\subset \operatorname*{int}K_{2}$. There are then $C_1$ and $h>0$ such that
$$
|\langle f,\varphi \rangle|\leq C_1\|\varphi\|_{\mathcal{E}^{\{M_{p}\},h}(K_{2})}, \quad \mbox{for all }\varphi\in \mathcal{E}^{(M_{p})}(\mathbb{S}^{n-1}).
$$
The sequence $M_p$ satisfies $(NA)$, hence, given $\ell$, one can find $C_{2}>0$, depending only on $h$ and $\ell$, such that $\|\varphi\|_{\mathcal{E}^{\{M_{p}\},h}(K_{2})}\leq C_{2}\|\varphi\|_{\mathcal{E}^{\{p!\},\ell}(K_{2})}$ for all $\varphi\in \mathcal{A}(\mathbb{S}^{n-1})$. Using this with $\varphi=P_{r\omega}$ and employing Lemma \ref{bvhl3},
$$
|P[f](r\omega)|=|\langle f,P_{r\omega}\rangle|\leq C_{1}C_{2}C(1-r), \quad \mbox{for all }\omega\in K_{1} \mbox{ and } \frac{1}{2}\leq  r<1,
$$ 
whence (\ref{support}) holds uniformly for $\omega\in K_{1}$.
 \end{proof}

\end{document}